\newtheorem{thm}{Theorem}[section]
\newtheorem{cor}[thm]{Corollary}
\newtheorem{prop}[thm]{Proposition}
\theoremstyle{definition}
\newtheorem{dfn}[thm]{Definition}
\theoremstyle{remark}
\newtheorem{rmk}[thm]{Remark}
\newcommand{\Z}{\mathbb{Z}}
\newcommand{\J}{\mathbb J}
\newcommand{\ZZ}{\Z}
\newcommand{\RR}{\mathbb{R}}
\newcommand{\TT}{\mathbb{T}}
\newcommand{\CC}{\mathbb{C}}
\newcommand{\PP}{\text{Pic}}
\newcommand{\CPP}{\text{CPic}}
\newcommand{\Pic}{\operatorname{Pic}}
\newcommand{\Aut}{\operatorname{Aut}}
\newcommand{\Gin}{\operatorname{Gin}}
\newcommand{\Br}{\operatorname{Br}}
\newcommand{\lspan}{\operatorname{span}}
\def\clsp{\operatorname{\overline{span}}}
\title{Quantum Heisenberg manifolds as twisted groupoid $C^*$-algebras}
\author[Kang]{Sooran Kang}
\address{Sooran Kang\\ Department of Mathematics and Statistics\\University of Otago\\PO Box 56\\Dunedin 9054\\New Zealand}
\email{sooran@maths.otago.ac.nz}
\author[Kumjian]{Alex Kumjian}
\address{Alex Kumjian\\ Department of Mathematics\\University of Nevada\\Reno NV 89557\\USA}
\email{alex@unr.edu}
\author[Packer]{Judith Packer}
\address{Judith Packer\\ Department of Mathematics\\ University of Colorado\\ Campus Box 395\\ Boulder CO 80309-0395\\ USA}
\email{packer@euclid.colorado.edu}
\subjclass[2010]{46L05 (primary),  46L55 (secondary)}
\keywords{Quantum Heisenberg manifolds; Twisted groupoid $C^*$-algebras; Principal $G$-bundles}
\begin{document}

\maketitle

\begin{abstract}
The quantum Heisenberg manifolds are noncommutive manifolds constructed by M. Rieffel as strict deformation quantizations of Heisenberg manifolds and have been studied by various authors. Rieffel constructed the quantum Heisenberg manifolds as the generalized fixed-point algebras of certain crossed product $C^*$-algebras, and they also can be realized as crossed products of $C(\TT^2)$ by Hilbert $C^*$-bimodules in the sense of Abadie et al.
In this paper, we describe how the quantum Heisenberg manifolds can also be realized as  twisted groupoid $C^*$-algebras.

\end{abstract}

\section{Introduction}

Let $X$ be a compact Hausdorff space and $\sigma$  a covering map from $X$ to itself.  The Renault-Deaconu groupoid $\Gamma$ associated with $(X,\sigma)$ was  introduced by Renault in \cite{Renextra} and studied by Deaconu in greater generality in \cite{D1}.  The twist groupoid $\Lambda$ over $\Gamma, $ in the sense of the second author \cite{Ku1,Ku2}, and its associated groupoid $C^*$-algebra $C^*(\Gamma; \Lambda)$ were studied in \cite{DKM} for more general $X$ and $\sigma$, in particular when $X$ is locally compact Hausdorff and $\sigma$ is a local homeomorphism. The twist groupoid $\Lambda$ was defined as an extension of $\Gamma$ by the groupoid $X\times \TT,$ with the notion of the twist that determines  $\Lambda$ generalizing circle-valued 2-cocycles over groups. One of the main results in \cite{DKM} is that for each twist $\Lambda$ over $\Gamma$, the groupoid $C^*$-algebra $C^*(\Gamma,\Lambda)$ is naturally isomorphic to a Cuntz-Pimsner algebra $\mathcal{O}_{\mathcal{H}}$, where $\mathcal{H}$ is a Hilbert $C^*$-correspondence. In particular, it was shown in \cite{DKM} that the $C^*$-correspondence $\mathcal{H}$ was given by $L_T\otimes \ell^2(\sigma),$ where $L_T$ is the space of continuous sections of the complex line bundle arising from the circle bundle associated to the twist $\Lambda,$ and $\ell^2(\sigma)$ is a $C^*$-correspondence associated to the local homeomorphism $\sigma:X\to X$.

The Picard group of a $C^*$-algebra was introduced by Brown, Green and Rieffel in \cite{BGR}, and consists of the isomorphism classes of full Hilbert $C^*$-bimodules (i.e. the imprimitivity bimodules over $C^*$-algebras in the sense of Rieffel \cite{Rie1}), equipped with the balanced tensor product. When the $C^*$-algebra is commutative, the collection of symmetric Hilbert $C^*$-bimodules in the sense of Definition \ref{symm-module} determine the classical Picard group, and have elements that can be regarded as the space of continuous sections of complex line bundles over the spectrum of the $C^*$-algebra in question. Thus, the elements of the classical Picard group are related to  twist groupoids $\Lambda$ of a Renault-Deaconu groupoid $\Gamma$ (see Section \ref{section_twist}) by results from \cite{DKM}.

The aim of this paper is to use the notion of a twisted groupoid $C^*$-algebra to study the quantum Heisenberg manifolds of M. Rieffel.
Quantum Heisenberg manifolds (henceforth abbreviated by QHMs) were introduced by Rieffel in \cite{Rie3} as strict deformation quantizations of Heisenberg manifolds. They are the fibers of a continuous field of $C^*$-algebras $\{D^{c,\hslash}_{\mu,\nu}\}$ where $\mu,\nu\in\RR$, $c$ is a positive integer and $\hslash$ is the deformation parameter representing the Planck constant. Rieffel's strict deformation quantization provides a way to construct noncommutative differential manifolds, and the QHMs form another interesting family of noncommutative manifolds that moves beyond noncommutative tori. Using the theory of Morita equivalence of QHMs developed by Abadie in \cite{Ab1,Ab3,Ab2}, Kang studied Yang-Mills theory on QHMs in \cite{Kang1}, and Lee found a minimizing set of the Yang-Mills functional in \cite{Lee}, using the same module developed by Kang in \cite{Kang1}. Chakraborty and Shinha studied the geometry of QHMs in \cite{Ch1, CS} and Gabriel studied cyclic cohomology and index pairings of QHMs in \cite{G}. Recently it was shown in \cite{Ch2} that in the case of QHMs, the Yang-Mills functional coming from spectral triples are the same as the Yang-Mills functional defined by Connes and Rieffel in \cite{CR}.

It was shown in \cite{AEE} that every QHM can be realized as a crossed product by a Hilbert $C^*$-bimodule. Such a crossed product is in particular a Cuntz-Pimsner algebra (see \cite{Kat2}). In fact, the QHM is generated by the fixed point algebra under the dual action and the first spectral subspace denoted by $D_1$. i.e. $D^{c,\hslash}_{\mu\nu}\cong C(\TT^2)\rtimes_{D_1}\ZZ$.
Also \cite[Corollary~5.5]{Ab4} implies that $D^{c,\hslash}_{\mu\nu}$ is strongly Morita equivalent to $(C(\TT^2)\otimes \mathcal{K})\rtimes_\beta \mathbb{Z}$ for some $\beta\in \Aut(C(\TT^2)\otimes \mathcal{K})$, where $\mathcal{K}$ is the algebra of compact operators on an infinite Hilbert space.  Hence the QHM gives an element of the equivariant Brauer group $\Br_{\mathbb Z}(\TT^2)$ in the sense of \cite{CKRW,PRW}.
Moreover, the isomorphism classes of QHMs have been studied using the Picard group in \cite{AE}; in particular, the Picard group of $\TT^2$ played an important role in determining the isomorphism classes of QHMs in \cite{AE}.

Upon expressing the QHM, $D^{c,\hslash}_{\mu\nu}$, as a crossed product by a Hilbert $C^*$-bimodule $C(\TT^2)\rtimes_{D_1}\ZZ,$ one can obtain a groupoid $\Gamma$ and a twist $\Lambda$ that have associated to them the circle bundle $T$ over the unit space of $\Lambda$.  It is then possible to construct the space of continuous sections of the complex line bundle associated to $T$. This in turn becomes a symmetric $C^*$-bimodule over a commutative $C^*$-algebra isomorphic to $C(\TT^2),$ which can be identified with the symmetric part of the first spectral subspace $D_1$ of QHMs. Intertwining these concepts, in this paper we will describe how QHMs can be realized as twisted groupoid $C^*$-algebras and give a specific construction of the twist groupoid $\Lambda$ from a transformation groupoid by means of transition functions of the circle bundle $T$.  Indeed our main theorem (Theorem~\ref{thm_main}) holds in greater generality for certain $C^{\ast}$-algebras constructed as a particular crossed product of a unital commutative $C^{\ast}$-algebra $A$ by a full Hilbert $C^*$-bimodule $M$ over $A$ twisted by an automorphism $\alpha$ of $A$.

Since the algebra structure of QHMs is quite different from that of noncommutative tori, not being defined by generators and relations, it is quite useful to have another way of realizing QHMs.
  We believe that by illustrating that the QHMs and other generalized fixed point algebras can be constructed in a variety of different fashions, it may lead to further progress in the study of noncommutative geometry on some dense subalagebras generally, and in particular in the study of the Yang-Mills theory of the QHMs.

We organize our paper as follows.
After reviewing some preliminary material in Section 2, we discuss the classical Picard group and review the description of the QHM as a crossed product by a Hilbert $C^*$-bimodule in Section 3. Moreover, we identify the symmetric part of the first spectral subspace $D_1$ of the QHM   with the symmetric Hilbert $C(\TT^2)$-bimodule $M^c$ in Proposition~\ref{prop:D1-module}, where $c$ is a positive integer.
In Section 4, we prove our main theorem and describe QHMs as certain twisted groupoid $C^*$-algebras by showing that $M^{-c}$ is isomorphic to the space of continuous sections of the complex line bundle associated to the Heisenberg manifold $N_c$. In Section 5, we obtain a pairing from a transformation groupoid $\Gamma$ and a principal $G$-bundle, for some compact abelian group $G,$ to form the associated `twist' groupoid $\Lambda$, where $\Lambda$ is a principal $G$-bundle over $\Gamma$.  In the case where $G=\mathbb T,$ these are the twist bundles of \cite{DKM}.
Of course one obtains the $C^{\ast}$-algebras only in the cases where $G=\mathbb T,$ but we thought these more general groupoids might also be of interest.

\subsection*{Acknowledgements}
This research project was originally motivated by a question posed by Olivier Gabriel, who asked if the quantum Heisenberg manifold could be expressed as a twisted groupoid $C^*$-algebra. This question was communicated to us by Jean Renault.

The second and third authors would like to thank the first author and Aidan Sims for hosting them at the University of Wollongong where this research collaboration began, and also would like to thank the faculty members at School of Mathematics and Applied Statistics of the University of Wollongong for their hospitality.

\section{Preliminaries}\label{sec:prem}
Most of the definitions that we state in this section are from \cite{Kat1}.
\begin{dfn}\cite[Definition 1.1]{BMS}\label{left-module}
Let $A$ be a $C^*$-algebra. A \emph{right Hilbert $A$-module} $M$ is a Banach space with a right action of the $C^*$-algebra $A$ and an $A$-valued inner product $\langle\cdot,\cdot\rangle_R$ such that for $\xi,\eta\in M$ and $a\in A$,
\begin{enumerate}
\item[$(a)$] $\langle\xi,\xi\rangle_R\ge 0$ and $\lVert\xi\rVert=\lVert\langle\xi,\xi\rangle_R\rVert^{1/2}$,
\item[$(b)$] $\langle\xi,\xi\rangle_R=0$ implies $\xi=0$,
\item[$(c)$] $\langle\xi,\eta\rangle_R=(\langle\eta,\xi\rangle_R)^*$,
\item[$(d)$] $\langle\xi,\eta\cdot a\rangle_R=\langle\xi,\eta\rangle_R a$.
\end{enumerate}
\end{dfn}

Similarly we can define a \emph{left Hilbert $A$-module} $M$ as a Banach space with a left action of the $C^*$-algebra $A$ and an $A$-valued inner product
$\langle\cdot,\cdot\rangle_L$ that satisfies $(a),(b),(c)$ of Definition \ref{left-module} and
\begin{itemize}
\item[$(d)'$] $\langle a\cdot \xi,\eta\rangle_L=a\langle \xi,\eta\rangle_L$.
\end{itemize}
We say that a Banach space $M$ is a Hilbert $A$-$B$ bimodule if $M$ is both a left Hilbert $A$-module and a right Hilbert $B$-module and satisfies the following equation : for $\xi,\eta,\zeta\in M$,
\begin{equation}\label{eq:bimodule}
\langle \xi,\eta\rangle_L\cdot \zeta=\xi\cdot\langle \eta,\zeta\rangle_R.
\end{equation}
If $A=B$, then we call it a Hilbert $A$-bimodule.

\begin{dfn}\cite[Definition 1.2]{Kat1}
For a right Hilbert $A$-module $M$, we denote by $\mathcal{L}(M)$ the $C^*$-algebra of all adjointable operators on $M$. For $\xi,\eta\in M$, the operator $\theta_{\xi,\eta}\in\mathcal{L}(M)$ is defined by $\theta_{\xi,\eta}(\zeta)=\xi\langle\eta,\zeta\rangle_R$ for $\zeta\in M$. We define $\mathcal{K}(M)\subseteq\mathcal{L}(M)$ by
\[\mathcal{K}(M)=\overline{\text{span}}\{\theta_{\xi,\eta}:\xi,\eta\in M\}.\]
\end{dfn}
The set $\mathcal{K}(M)$ is a (two-sided and ${}^*$-closed) ideal of $\mathcal{L}(M)$, which is *-invariant. We often call $\mathcal{K}(M)$ the algebra of compact operators on $M$.

\begin{dfn}\cite[Definition 1.3]{Kat1}
 We say that a right Hilbert $A$-module $M$ is a \emph{$C^*$-correspondence} over $A$ when a *-homomorphism $\phi_M:A\rightarrow\mathcal{L}(M)$ is given.
\end{dfn}
We refer to $\phi_M$ as the left action of $A$ on the $C^*$-correspondence $M$. If $M$ is a Hilbert $A$-bimodule, then $M$ can be realized as a $C^*$-correspondence over $A$ with $\phi_M: A\rightarrow\mathcal{L}(M)$ given by $\phi_M(a)\xi= a\cdot \xi$.
\begin{dfn}\cite[Definition 1.4]{Kat2}
A $C^*$-correspondence $M$ over a $C^*$-algebra $A$ is called \emph{non-degenerate} if $\overline{\text{span}}\{\phi_M(a)\xi\in M: a\in A,\xi\in M\}=M$, and \emph{full} if $\overline{\text{span}}\{\langle\xi,\eta\rangle_R\in A:\xi,\eta\in M\}=A$.
\end{dfn}

\begin{dfn}\cite[Definition 3.2]{Kat1}
For a $C^*$-correspondence $M$ over $A$, we define an ideal $J_M$ of $A$ by
\[J_M=\phi^{-1}_M(\mathcal{K}(M))\cap(\text{ker}\phi_M)^{\bot}.\]
\end{dfn}

\begin{rmk}
Note that $J_M=\phi^{-1}_M(\mathcal{K}(M))$ when $\phi_M$ is injective.
According to \cite[Lemma~2.4]{Kat2}, if there exists an ideal $J$ of $A$ such that the restriction of $\phi_M$ to $J$ is an isomorphism onto $\mathcal{K}(M)$, then $J=J_M$.

Moreover, \cite[Lemma~3.4]{Kat2} shows that a Hilbert $A$-bimodule $M$ is the $C^*$-correspondence $M$ over $A$ with the condition $\phi_M(J_M)=\mathcal{K}(M))$, and it was referred to as a Hilbert $C^*$-bimodule $M$ over $A$ by Abadie et al. in \cite{AEE}. A full Hilbert $A$-bimodule $M$ (i.e. $\langle M,M\rangle_L=\langle M,M\rangle_R=A$) is a left $A$ - right $A$ imprimitivity bimodule $M$ in the sense of Rieffel given in \cite{Rie1}.
\end{rmk}

For two $C^*$-correspondences $X$ and $Y$ over a $C^*$-algebra $A$, one can define a tensor product of $X$ and $Y$ denoted by $X\otimes_A Y$ as in \cite[Definition 1.4]{Kat1}. By definition, we have
\[
X\otimes_A Y=\overline{\lspan}\{\xi\otimes\eta\mid \xi\in X, \eta\in Y\}
\]
and $(\xi\cdot a)\otimes \eta =\xi\otimes (\phi_Y(a)\cdot\eta)$ for $\xi\in X,\eta\in Y$ and $a\in A$.

\begin{dfn}\cite[Definition~2.1]{Kat1}\label{Def:Corr}
A \emph{representation} of a $C^*$-correspondence $M$ over $A$ on a $C^*$-algebra $B$ is a pair $(\pi,t)$ consisting of a *-homomorphism $\pi:A\rightarrow B$ and a linear map $t:M\rightarrow B$ satisfying, for $\xi,\eta\in M$ and $a\in A$,
\begin{enumerate}
\item[$(a)$] $t(\xi)^*t(\eta)=\pi(\langle\xi,\eta\rangle_R)$,
\item[$(b)$] $\pi(a)t(\xi)=t(\phi_M(a)\xi)$.
\end{enumerate}
\end{dfn}
For a representation $(\pi,t)$ on $B$, we denote by $C^*(\pi,t)$ the $C^*$-algebra generated by the images of $\pi$ and $t$ in $B$.

\begin{dfn}\cite[Definition 2.3]{Kat1}
For a representation $(\pi,t)$ of a $C^*$-correspondence $M$ on $B$, we define a *-homomorphism $\psi_t:\mathcal{K}(M)\rightarrow B$ by $\psi_t(\theta_{\xi,\eta})=t(\xi)t(\eta)^*\in B$ for $\xi,\eta\in M$.
\end{dfn}
Note that $\psi_t(\mathcal{K}(M))\subseteq C^*(\pi,t)$.

\begin{dfn}\cite[Definition~3.5]{Kat1}\label{Def:CP}
For a $C^*$-correspondence $M$ over a $C^*$-algebra $A$, the $C^*$-algebra $\mathcal{O}_M$ is defined by $\mathcal{O}_M=C^*(\pi_M,t_M)$, where $(\pi_M,t_M)$ is the universal representation of $M$ satisfying the condition
\begin{equation}\label{eq:CP_condition}
\pi_M(a)=\psi_t(\phi_M(a))\;\;\text{for all}\;\; a\in J_M.
\end{equation}
 We call $\mathcal{O}_M$ the \emph{Cuntz-Pimsner algebra associated with $M$}.
\end{dfn}

We now review crossed products of $C^*$-algebras by Hilbert $C^*$-bimodules as defined by Abadie, Eilers, and Exel.

\begin{dfn}\cite[Definition~2.1, Definition~2.4]{AEE}\label{def:AEE}
Let $A$ be a $C^*$-algebra and $M$ be a Hilbert $A$-bimodule. A \emph{covariant representation} of $(A,M)$ on $B(\mathbb{H})$ is a pair $(\kappa_A, \kappa_M)$ consisting of a $*$-homomorphism $\kappa_A:A\to B(\mathbb{H})$ and a linear map $\kappa_M:M\to B(\mathbb{H})$ satisfying the following: For $a\in A$ and $x,y\in M$,
\begin{enumerate}
\item[$(a)'$] $\kappa_M(a\cdot x)=\kappa_A(a)\kappa_M(x)$,
\item[$(b)'$] $\kappa_M(x\cdot a)=\kappa_M(x)\kappa_A(a)$,
\item[$(c)'$] $\kappa_A(\langle x,y\rangle_L)=\kappa_M(x)\kappa_M(y)^*$,
\item[$(d)'$] $\kappa_A(\langle x,y\rangle_R)=\kappa_M(x)^*\kappa_M(y)$.
\end{enumerate}
A \emph{crossed product} $A\rtimes_M\ZZ$ of $A$ by $M$ is the universal $C^*$-algebra generated by the images of covariant representations $(\kappa_A, \kappa_M)$ of $(A,M)$.
\end{dfn}

As discussed in \cite{Kat2}, conditions $(a)'$ and $(d)'$ of Definition~\ref{def:AEE} are equivalent to the conditions for representations of $M$ given in Definition~\ref{Def:Corr} as a $C^*$-correspondence with $\phi_M$ given by $\phi_M(a)\xi=a\cdot \xi$ for $a\in A$ and $\xi\in M$. Since $M$ is a Hilbert $A$-bimodule, \cite[Lemma~3.3]{Kat1} gives $J_M=\clsp\{\langle \xi,\eta\rangle_L\in A\mid \xi,\eta\in M\}$. Also the bimodule condition \eqref{eq:bimodule} gives $\phi_M(\langle \xi,\eta\rangle_L)=\theta_{\xi,\eta}$ for $\xi,\eta\in M$. Thus we see the condition $(c)'$ is equivalent to the condition \eqref{eq:CP_condition} in Definition~\ref{Def:CP}. Also the condition $(b)'$ follows from $(a)', (c)'$ and $(d)'$. Therefore, when the $C^*$-correspondence comes from a Hilbert $A$-bimodule $M$, $\mathcal{O}_M$ is isomorphic to the crossed product of $A$ by the Hilbert $C^*$-bimodule $M,\;A\rtimes_M\ZZ$ defined in \cite{AEE}. (Further details on this can be found in \cite{Kat1,Kat2}).

\section{The Classical Picard Groups}\label{sec:pic}

Let $A$ be a $C^*$-algebra. The isomorphism classes of full Hilbert $A$-bimodules form a group under tensor product as defined in \cite[Section 5.9]{Rie1} which we call the \emph{Picard group} of $A,$ and denote by $\PP(A),$ as in \cite{BGR}.
By an isomorphism of left (respectively, right) Hilbert $A$-modules, we mean an isomorphism of left (respectively right) $A$-modules that preserves the left (respectively, right) inner product. An isomorphism of Hilbert $A$-bimodules is an isomorphism of both left and right Hilbert $A$-modules.
Let $A$ be a $C^*$-algebra and let $u$ be a unitary element of the multiplier algebra of $A$.
According to \cite[Proposition 3.1]{BGR}, there is an anti-homomorphism from $\Aut(A)$ to $\Pic(A)$ such that the sequence
\[\label{exact-seq}
1 \to \Gin(A) \to \Aut(A) \to \Pic(A)
\]
is exact, where $\Gin(A)$ is the group of all generalized inner automorphisms $Ad_u$ of $A$ given by $Ad_u(a)=uau^{-1}$ for $a\in A$.
  Thus an automorphism $\theta$ of $A$ is mapped to a Hilbert $C^*$-bimodule $A_\theta$ that is determined by $\theta$. In fact, $A_\theta$ is the vector space $A$ with the following actions and $A$-valued inner products: for $\xi,\eta,a\in A$,
\[\begin{split}
a\cdot \xi=a\xi, \quad &\xi\cdot a=\xi\theta(a),\\
\langle \xi,\eta\rangle_L=\xi\eta^*, \quad &\langle \xi,\eta\rangle_R=\theta^{-1}(\xi^*\eta).
\end{split}
\]
 Note that we choose to follow the notation of Abadie and Exel given in \cite{AE} rather that given in Brown, Green, and Rieffel's version from \cite{BGR}; this implies that we have $A_\alpha\otimes A_\beta\cong A_{\alpha\beta}$ for $\alpha, \beta\in \Aut(A)$.

For a full Hilbert $A$-bimodule $M$ with $\langle M,M\rangle_L=\langle M,M\rangle_R=A$, and for $\theta\in\Aut(A)$, define $M_\theta$ to be $M\otimes_A A_\theta$. Then as shown in \cite{AE}, the map $\xi\otimes a \mapsto \xi\cdot a$ identifies $M_\theta$ with the vector space $M$ that carries the following actions and inner-products: for $\xi,\eta\in M$ and $a\in A$,
\[\begin{split}
a\cdot_{M_\theta} \xi= a\cdot_M \xi,\quad & \xi\cdot_{M_\theta} a=\xi\cdot_M \theta(a),\\
\langle \xi,\eta\rangle^{M_\theta}_L=\langle \xi,\eta\rangle^M_L,\quad &\langle \xi,\eta\rangle^{M_\theta}_R=\theta^{-1}(\langle \xi,\eta\rangle^M_R).
\end{split}
\]
 The general formulas for $M_\theta$ are given in \cite[p.413]{AE} when $\theta$ is a partial automorphism of $A$. When the actions and inner-products are clear from context, we drop the subscripts and superscripts $M_\theta$ from the formulas.

  Similarly, for $\alpha\in\Aut(A)$ and a Hilbert $A$-bimodule $M$, the map $a\otimes \xi\otimes b \mapsto a\cdot \xi\cdot b$ identifies the Hilbert $A$-bimodule $A_\alpha\otimes_A M\otimes_A A_{\alpha^{-1}}$ with the vector space $M$ that carries the following operations: for $\xi,\eta\in M$ and $a\in A$,
\[\begin{split}
a\cdot \xi =\alpha^{-1}(a)\xi,\quad & \xi\cdot a=\xi\alpha^{-1}(a),\\
\langle \xi,\eta\rangle_L=\alpha(\langle \xi,\eta\rangle^M_L),\quad & \langle \xi,\eta\rangle_R=\alpha(\langle \xi,\eta\rangle^M_R).
\end{split}\]

When $A=C(X)$ for a compact Hausdorff space $X$, $\PP(A)$ contains isomorphism classes of what are termed \emph{symmetric} (see Definition \ref{symm-module}) Hilbert $A$-bimodules in \cite{AE}. A symmetric Hilbert $A$-bimodule may be regarded as the space of sections of a complex line bundle over $X$. (Readers are referred to Appendix (A) in \cite{Rae1} for further details on this). We call this subgroup of $\PP(A)$ the \emph{classical Picard group} of $C(X),$ and it will be denoted by $\CPP(C(X))$.
In the case where $X=\TT^2$, we have $\Pic(C(\TT^2))\cong H^2(\TT^2,\ZZ)\cong \ZZ$ by \cite[\S2]{PR}.

\begin{dfn}\cite[Definition~1.5]{AE}\label{symm-module}
Let $A$ be a commutative $C^*$-algebra. A Hilbert $A$-bimodule $M$ is said to be \emph{symmetric} if $a\cdot\xi=\xi\cdot a$ for $\xi\in M$ and $a\in A$.
If $M$ is a Hilbert $A$-bimodule over $A$, the \emph{symmetrization} of $M$ is the symmetric Hilbert $A$-bimodule $M^s$ whose underlying vector space is $M$ with its given left Hilbert-module structure, and right module structure defined by:
\[\xi\cdot a=a\xi,\;\;\langle \xi,\eta\rangle^{M^s}_R=\langle \xi,\eta\rangle^M_L,\]
for $a\in A, \xi,\eta\in M^s$.
\end{dfn}

\begin{rmk}
The commutativity of $A$ guarantees the compatibility of the left and right actions for $M^s$.
Also it is straightforward to show that $\langle \xi,\eta\rangle^{M_s}_L\cdot \zeta = \xi\cdot \langle \eta,\zeta\rangle^{M_s}_R$ for all $\xi,\eta,\zeta\in M^s$.
As mentioned in Remark 1.6 in \cite{AE}, the bimodule $M^s$ is, up to isomorphism, the only symmetric Hilbert $A$-bimodule that is isomorphic to $M$ as a left Hilbert module.
\end{rmk}

The following proposition was implicitly shown in \cite{AE} for any Hilbert $C^*$-bimodule. We state it here explicitly for the case of full $C^*$-bimodules for later reference.
\begin{prop}\label{prop:symm}
Let $A$ be an commutative $C^*$-algebra. Let $M$ be a full Hilbert $A$-bimodule. i.e. $\langle M,M\rangle_L=\langle M,M\rangle_R=A$.
Then $M$ is isomorphic to $M^s\otimes_A A_{\alpha}$ for some $\alpha\in\text{Aut}(A)$.
\end{prop}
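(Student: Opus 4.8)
The plan is to produce an automorphism $\alpha\in\Aut(A)$ recording the discrepancy between the left and right actions of $A$ on $M$, and then to verify that the identity map on the underlying vector space $M$ implements an isomorphism $M\cong M^s\otimes_A A_\alpha$ of Hilbert $A$-bimodules. The starting observation is that, since $M$ is full on both sides, it is an $A$--$A$ imprimitivity bimodule, so the bimodule condition \eqref{eq:bimodule} gives $\phi_M(\langle\xi,\eta\rangle_L)=\theta_{\xi,\eta}$, and fullness of $\langle\cdot,\cdot\rangle_L$ forces $\phi_M$ to be an isomorphism of $A$ onto $\mathcal{K}(M)$. Because $A=C(X)$ is unital, $\mathcal{K}(M)$ is unital and hence $\mathcal{K}(M)=\mathcal{L}(M)$, so in fact $\phi_M\colon A\xrightarrow{\cong}\mathcal{L}(M)$.

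First I would construct $\alpha$. Right multiplication $\rho(a)\colon\xi\mapsto\xi\cdot a$ is adjointable with adjoint $\rho(a^*)$ (using commutativity of $A$ together with the right-module axioms), so $\rho(A)\subseteq\mathcal{L}(M)=\phi_M(A)$; I then set $\alpha:=\phi_M^{-1}\circ\rho$, a $*$-homomorphism satisfying $\xi\cdot a=\alpha(a)\cdot\xi$ for all $\xi\in M$ and $a\in A$. Injectivity of $\alpha$ follows from fullness of $\langle\cdot,\cdot\rangle_R$; surjectivity follows from the symmetric argument applied to the left Hilbert-module structure, where $\phi_M(A)\subseteq\mathcal{L}({}_AM)=\rho(A)$ by fullness of $\langle\cdot,\cdot\rangle_L$. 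Thus $\alpha\in\Aut(A)$.

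Next I would compare the two structures. By Definition \ref{symm-module}, $M^s$ carries exactly the left Hilbert-module structure of $M$, so feeding $M\rightsquigarrow M^s$ and $\theta\rightsquigarrow\alpha$ into the explicit description of $M_\theta=M\otimes_A A_\theta$ recorded in Section \ref{sec:pic}, the identity map automatically preserves the left action and the left inner product, and the relation $\xi\cdot a=\alpha(a)\cdot\xi$ exactly matches the right action of $M^s\otimes_A A_\alpha$. The only remaining point is agreement of the right inner products, namely that $\langle\xi,\eta\rangle^M_R=\alpha^{-1}\big(\langle\xi,\eta\rangle^{M^s}_R\big)=\alpha^{-1}\big(\langle\xi,\eta\rangle^M_L\big)$, i.e. that $\langle\cdot,\cdot\rangle_L$ and $\langle\cdot,\cdot\rangle_R$ are intertwined by the same $\alpha$ that relates the actions.

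I expect this last compatibility to be the main obstacle, and it is precisely where commutativity of $A$ enters essentially (the corresponding identity fails for noncommutative $A$). The clean way to see it is that $\phi_M(A)=\mathcal{L}(M)$ is itself commutative, which forces the fibres of $M$ over $X$ to be one-dimensional; on each fibre the bimodule identity \eqref{eq:bimodule} together with $\xi\cdot a=\alpha(a)\cdot\xi$ collapses to a scalar computation yielding $\langle\xi,\eta\rangle_L=\alpha(\langle\xi,\eta\rangle_R)$. Alternatively one argues directly from \eqref{eq:bimodule}, the defining relation of $\alpha$, and the adjointability of both $\phi_M$ and $\rho$ with respect to each inner product. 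Once this identity is established, the identity map is an isomorphism of Hilbert $A$-bimodules $M\cong M^s\otimes_A A_\alpha$, which is the assertion.
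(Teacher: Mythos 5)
Your route is genuinely different from the paper's. The paper disposes of the proposition in two lines: since $\langle M^s,M^s\rangle_R=\langle M,M\rangle_L=A$ and $\langle M,M\rangle_R=A$, the identity map $M^s\to M$ is an isomorphism of left Hilbert $A$-modules between bimodules that are full on the right, and the proof of \cite[Proposition~1.1]{AE} then hands over an automorphism $\alpha$ with $M\cong M^s\otimes_A A_\alpha$. You instead re-derive that special case of Abadie--Exel from scratch, building $\alpha=\phi_M^{-1}\circ\rho$ directly out of the discrepancy between the two actions. The construction of $\alpha$ itself is sound (modulo the unitality point below) and is essentially what the cited proof produces internally; a self-contained argument is a legitimate thing to want here.

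There are, however, three concrete problems. First, your step $\mathcal{K}(M)=\mathcal{L}(M)$ (used both to define $\alpha$ and to prove its surjectivity) requires $A$ to be unital, which is not among the hypotheses: the proposition assumes only that $A$ is commutative, and the paper's citation of \cite{AE} covers the non-unital case because it works with the ideals $\langle M,M\rangle_R$ rather than with $\mathcal{L}(M)$. Second, and more seriously, the compatibility you single out as ``the only remaining point,'' namely $\langle\xi,\eta\rangle^M_R=\alpha^{-1}\bigl(\langle\xi,\eta\rangle^M_L\bigr)$, is false as written: already for the trivial bimodule $M=A$ with pointwise actions one has $\alpha=\mathrm{id}$, $\langle\xi,\eta\rangle_L=\xi\eta^*$, $\langle\xi,\eta\rangle_R=\xi^*\eta$, and the asserted identity would force $\xi^*\eta=\xi\eta^*$. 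The true identity is the flipped one, $\alpha\bigl(\langle\xi,\eta\rangle^M_R\bigr)=\langle\eta,\xi\rangle^M_L$; correspondingly Definition~\ref{symm-module} must be read as $\langle\xi,\eta\rangle^{M^s}_R=\langle\eta,\xi\rangle^M_L$ (as literally printed it is $A$-linear in the first variable and conjugate-linear in the second, hence not a right inner product at all), so you have inherited a slip of the paper, but your verification then has to target the corrected identity. Third, you do not actually prove it: of your two proposed strategies, the fibrewise one does work --- $\phi_M(A)=\mathcal{K}(M)$ forces $\mathcal{K}(M)$ to be commutative, hence the fibres of $M$ to be one-dimensional, and pointwise both sides reduce to products of scalars --- but the ``direct'' manipulation of \eqref{eq:bimodule} together with adjointability only converts the desired identity into equivalent restatements of itself (for instance $\zeta\cdot\langle\xi,\eta\rangle_R=\langle\eta,\xi\rangle_L\cdot\zeta$ for all $\zeta$), so some input beyond the bimodule axioms and commutativity of $A$ is genuinely needed. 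In short: the skeleton is right and is an honest alternative to the paper's citation, but the step you yourself identify as the crux is both misstated and left unestablished.
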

\begin{proof}
Since $M$ is a full Hilbert $A$-bimodule, $\langle M,M\rangle_R=A$ and $\langle M^s,M^s\rangle_R=\langle M,M\rangle_L=A$.
Then the proof of \cite[Proposition 1.1]{AE} gives an automorphism $\alpha:A\to A$ such that $(M^s)\otimes_A A_\alpha$ is isomorphic to $M$.
\end{proof}

The following theorem is due to Abadie and Exel.

\begin{thm}\cite[Theorem 1.12]{AE}
Let $A$ be a commutative $C^*$-algebra. Then $\CPP(A)$ is a  normal subgroup of $\PP(A)$ and
\[\PP(A)=\CPP(A)\rtimes\text{Aut}(A),\]
where the action of $\text{Aut}(A)$ on $\CPP(A)$ is given by conjugation, $\theta\cdot M=A_{\theta}\otimes_A M\otimes_A A_{\theta^{-1}}$.
\end{thm}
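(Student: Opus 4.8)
The plan is to verify the abstract group-theoretic criterion for an internal semidirect product: if a group $G$ has subgroups $N$ and $H$ with $N\cap H=\{e\}$, $G=NH$, and $N$ normal in $G$, then $G\cong N\rtimes H$ with $H$ acting on $N$ by conjugation. Here I take $G=\PP(A)$, $N=\CPP(A)$, and $H$ the image of $\Aut(A)$ under the map $\theta\mapsto[A_\theta]$ recorded in Section~\ref{sec:pic}. The substance of the proof is then to check these four conditions and to identify the resulting conjugation action with the prescribed one.

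First I would pin down $H$. Since $A$ is commutative, so is its multiplier algebra; hence every unitary $u\in M(A)$ is central and $Ad_u=\mathrm{id}$, so $\Gin(A)=\{\mathrm{id}\}$. The exact sequence $1\to\Gin(A)\to\Aut(A)\to\PP(A)$ then shows that $\theta\mapsto[A_\theta]$ is injective, and it is a homomorphism because, with the Abadie--Exel convention, $A_\alpha\otimes_A A_\beta\cong A_{\alpha\beta}$; thus $H\cong\Aut(A)$. Next, $\CPP(A)$ is a subgroup: commutativity of $A$ makes the left and right actions of a tensor product of symmetric bimodules agree, the identity bimodule $A$ is symmetric, and the conjugate (inverse) of a symmetric bimodule is again symmetric, so $N$ is closed under the group operations.

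The factorization $G=NH$ is precisely Proposition~\ref{prop:symm}: every full Hilbert $A$-bimodule satisfies $M\cong M^s\otimes_A A_\alpha$, whence $[M]=[M^s]\cdot[A_\alpha]$ with $[M^s]\in N$ and $[A_\alpha]\in H$. For the trivial intersection, suppose $[A_\theta]=[S]$ for some symmetric bimodule $S$. The symmetrization of a bimodule depends only on its left Hilbert-module structure, so $S\cong S^s\cong(A_\theta)^s$ by the uniqueness noted after Definition~\ref{symm-module} (using $A_\theta\cong S$ as left modules). But $A_\theta$ carries the \emph{standard} left module structure of $A$, so $(A_\theta)^s$ is the standard symmetric bimodule, i.e.\ $(A_\theta)^s\cong A$. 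Hence $S\cong A$ and $[A_\theta]=[S]=e$, giving $N\cap H=\{e\}$.

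It remains to handle normality, and here I would first show that $H$ normalizes $N$. Using the explicit description (recorded just before Definition~\ref{symm-module}) of $A_\theta\otimes_A M\otimes_A A_{\theta^{-1}}$, the conjugated operations become $a\cdot\xi=\theta^{-1}(a)\cdot_M\xi$ and $\xi\cdot a=\xi\cdot_M\theta^{-1}(a)$; when $M$ is symmetric these two coincide, so the conjugate lies in $\CPP(A)$. This same computation identifies conjugation by $[A_\theta]$ with the prescribed action $\theta\cdot M=A_\theta\otimes_A M\otimes_A A_{\theta^{-1}}$. Normality of $N$ in all of $G$ then follows formally from $G=NH$: for $g=nh$ one has $gNg^{-1}=n\,(hNh^{-1})\,n^{-1}=nNn^{-1}=N$. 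Assembling the four properties yields $\PP(A)=\CPP(A)\rtimes\Aut(A)$ with the stated conjugation action. I expect the trivial-intersection step to be the main obstacle, since it is the only place requiring the uniqueness of the symmetrization together with the collapse of $\Gin(A)$ in the commutative case; the remaining steps reduce either to the bookkeeping of Section~\ref{sec:pic} or to the purely formal semidirect-product criterion.
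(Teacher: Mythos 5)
You should note at the outset that the paper itself contains no proof of this statement: it is quoted verbatim from Abadie--Exel \cite{AE} (their Theorem~1.12), and the paper only records the ingredients around it (the exact sequence of \cite{BGR}, Proposition~\ref{prop:symm}, the conjugation formulas, and the uniqueness of the symmetrization in the remark following Definition~\ref{symm-module}). So there is no in-paper argument to compare yours against; judged on its own terms, your proof is correct. The four conditions of the internal semidirect-product criterion are all verified at the right level of detail: $\Gin(A)=\{\mathrm{id}\}$ for commutative $A$ plus the \cite{BGR} exact sequence gives that $\theta\mapsto[A_\theta]$ embeds $\Aut(A)$ into $\PP(A)$; Proposition~\ref{prop:symm} gives the factorization $\PP(A)=\CPP(A)\cdot H$; the trivial-intersection step, which you correctly single out as the crux, is sound --- since $A_\theta$ carries the standard left Hilbert-module structure, $(A_\theta)^s\cong A$, and the uniqueness remark forces any symmetric bimodule isomorphic to $A_\theta$ to be isomorphic to $A$; and normality of $\CPP(A)$ in all of $\PP(A)$ follows formally once $H$ is seen to normalize it via the recorded formulas for $A_\theta\otimes_A M\otimes_A A_{\theta^{-1}}$. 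For comparison, Abadie--Exel package the same mathematics as a split short exact sequence: the uniqueness of the decomposition $M\cong M^s\otimes_A A_\alpha$ yields a well-defined surjective homomorphism $\PP(A)\to\Aut(A)$, $[M]\mapsto\alpha$, whose kernel is $\CPP(A)$, and $\theta\mapsto[A_\theta]$ splits it. Your internal-semidirect-product verification is the mirror image of that argument and rests on exactly the same two pillars (the decomposition and the injectivity of $\theta\mapsto[A_\theta]$), so the difference is one of organization rather than substance.
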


 Thus it follows that every full Hilbert $A$-bimodule is isomorphic to a tensor product of a symmetric full Hilbert $A$-bimodule and a Hilbert $A$-bimodule of the form $A_\theta$ where $\theta$ is an automorphism of $A$.

Now we describe a QHM as a crossed product by a full Hilbert $C(\TT^2)$-bimodule, and identify the symmetric part of the first spectral space of the QHM as follows.
According to the original construction by Rieffel \cite{Rie3}, the QHMs $ \{D^{c,\hslash}_{\mu\nu}\}_{\hslash\in \RR} $ are generalized fixed point algebras of certain crossed product $C^*$-algebras under proper actions.
 In particular, the algebra $D^{c,\hslash}_{\mu\nu}$ can be described as follows. (In the sequel, we follow Abadie's notation as given in \cite{Ab1}).
Since our discussion will not depend on $\hslash$, and since $D^{c,\hslash}_{\mu\nu}\cong D^{c,1}_{\mu'\nu'}$ for appropriate choices of $\mu'$ and $\nu',$ in what follows, we let $\hslash=1$ and drop the superscript.

 Let $\lambda$ and $\sigma$ be the commuting actions of $\Z$ on $\RR\times \TT$ arising from the homeomorphisms
\[\lambda_p(x,y)=(x+2 p\mu,y+2 p\nu)\;\;\text{and}\;\;\sigma_p(x,y)=(x-p,y),\]
where $\mu,\nu\in\RR$ and $p\in\Z$. Then, forming the crossed product $C^*$-algebra $C_b(\RR\times\TT)\rtimes_{\lambda}\Z$
in the usual way, we define an action $\rho$ of $\Z$ on $C_b(\RR\times\TT)\rtimes_{\lambda}\Z$ by
\[(\rho_k\Phi)(x,y,p)=\overline{e}(ckp(y- p\nu))\Phi(x+k,y,p),\]
where $e(x)=exp(2\pi ix)$ and $c$ is a positive integer. The generalized fixed point algebra $D^{c}_{\mu\nu}$ of $C_b(\RR\times\TT)\rtimes_{\lambda}\Z$
by the action $\rho$ is defined to be the closure in the multiplier algebra of $C_b(\RR\times\TT)\rtimes_{\lambda}\Z$ of the *-subalgebra
consisting of functions $\Phi\in C_c(\RR\times\TT\times\Z)$ that satisfy $\rho_k(\Phi)=\Phi$ for $k\in\Z$.
By definition, we have
\[D^{c}_{\mu\nu}=\overline{\text{span}}\{\Phi\in C_c(\RR\times\TT\times\Z)\mid\overline{e}(ckp(y- p\nu))\Phi(x+k,y,p)=\Phi(x,y,p)\;\;\text{for}\;\;k\in\Z\}.\]
There is a natural dual action $\gamma$ of $\TT$ on $D^{c}_{\mu\nu}$ given by
\begin{equation}\label{eq:action_gamma}
(\gamma_z\Phi)(x,y,p)=z^p\Phi(x,y,p)=e(pz)\Phi(x,y,p).
\end{equation}
Then as in \cite[Example~3.3]{AEE}, the $n$-th spectral subspace $D_n$ of $D^{c}_{\mu\nu}$ is given by:
\[D_n =\{f\delta_n|f\in C_b(\RR\times\TT), f(x+1,y)=e(cn(y- n\nu))f(x,y)\}.\]
It is obvious that the fixed point algebra $D_0$ is isomorphic to $C(\TT^2)$ and it is routine to check that the first spectral subspace $D_1$ has the structure of a $C(\TT^2)$-bimodule, with the following formulas:
For $\phi\in C(\TT^2)$ and $g_1,g_2\in D_1$,
\[(\phi\cdot g_1)(x,y)=\phi(x,y)g_1(x,y),\;\;(g_1\cdot\phi)(x,y)=g_1(x,y)\lambda(\phi)(x,y),\]
\[\langle g_1,g_2\rangle_L(x,y)=g_1(x,y)\overline{g_2}(x,y),\;\;\langle g_1,g_2\rangle_R(x,y)=(\lambda^{-1}(\overline{g_1}g_2))(x,y),\]
where $\lambda$ is the action of $\Z$ on $\RR\times\TT$ given by $\lambda(x,y)=(x+2\mu,y+2\nu)$.
According to \cite[Example~3.3]{AEE}, $D_1D^*_1=D^*_1 D_1=C(\TT^2),$ which implies that $D_1$ is a full $C(\TT^2)$-bimodule.

The following results due to Abadie, Exel and Eiler are found in \cite{AE, AEE},  and we state them here in one proposition for later reference.

\begin{prop}\cite{AE,AEE}\label{prop:D1-module}
Let $c$, $\mu$, $\nu$, $D^c_{\mu\nu}$ and $D_1$ be as above. Let $\alpha$ be the homeomorphism on $\TT^2$ defined by $\alpha(x,y)=(x+2\mu, y+2\nu)$ and $\alpha'\in\Aut(C(\TT^2))$ induced from $\alpha$.
Let
\[M^c=\{f\in C_b(\RR\times \TT)\mid f(x+1,y)=e(cy)f(x,y)\}.\]
Then
\begin{itemize}
\item[$(a)$] $M^c$ is a symmetric $C(\TT^2)$-bimodule,
\item[$(b)$] $D^c_{\mu\nu}\cong C(\TT^2)\rtimes_{D_1}\ZZ$, and
\item[$(c)$] $D_1\cong M^c\otimes(C(\TT^2))_{\alpha'}$.
\end{itemize}
\end{prop}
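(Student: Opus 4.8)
The plan is to treat the three assertions in turn, the last being the substantive one. Throughout I regard $C(\TT^2)$ as the algebra of functions on $\RR\times\TT$ that are $1$-periodic in the first variable, I write $\lambda(x,y)=(x+2\mu,y+2\nu)$, and I take $\alpha'(\phi)=\phi\circ\lambda$ for the automorphism of $C(\TT^2)$ induced by $\alpha=\lambda$. One first checks this is well defined, since $\phi\circ\lambda$ is again $1$-periodic in $x$ whenever $\phi$ is, and that this convention is the one making the displayed formulas for $D_1$ consistent (the bimodule compatibility \eqref{eq:bimodule} for $D_1$ forces $g\cdot\phi=g\,(\phi\circ\lambda)$ to pair with $\langle g_1,g_2\rangle_R=\lambda^{-1}(\overline{g_1}g_2)$).

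For $(a)$ I would equip $M^c$ with the pointwise operations $\phi\cdot f=f\cdot\phi=\phi f$, $\langle f,g\rangle_L=f\overline g$ and $\langle f,g\rangle_R=\overline f g$. The only points needing verification are that these land in the correct spaces: if $f,g\in M^c$ then $(\phi f)(x+1,y)=e(cy)(\phi f)(x,y)$ because $\phi$ is $x$-periodic, so $\phi f\in M^c$, while $(f\overline g)(x+1,y)=|e(cy)|^2 f\overline g=f\overline g$ shows $\langle f,g\rangle_L\in C(\TT^2)$, and likewise for $\langle\cdot,\cdot\rangle_R$, using $|e(cy)|=1$. The Hilbert-module axioms of Definition~\ref{left-module} and the compatibility \eqref{eq:bimodule} then reduce to commutativity and associativity of pointwise multiplication, and since the left and right actions coincide, $M^c$ is symmetric in the sense of Definition~\ref{symm-module}. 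Concretely $M^c$ is the module of continuous sections of the line bundle over $\TT^2$ with transition function $e(cy)$.

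For $(b)$ I would invoke the grading picture of \cite{AEE}: the dual action $\gamma$ of \eqref{eq:action_gamma} endows $D^c_{\mu\nu}$ with a topological $\ZZ$-grading whose $n$-th spectral subspace is $D_n$, with $D_0\cong C(\TT^2)$, $D_mD_n\subseteq D_{m+n}$, $D_n^*=D_{-n}$, and $D_1$ a full Hilbert $C(\TT^2)$-bimodule (the fullness $D_1D_1^*=D_1^*D_1=C(\TT^2)$ being recorded in the text preceding the proposition). This is exactly the data of a crossed product by a Hilbert bimodule as in Definition~\ref{def:AEE}, and the conditional expectation onto $D_0$ afforded by $\gamma$ yields faithfulness of the canonical surjection $C(\TT^2)\rtimes_{D_1}\ZZ\to D^c_{\mu\nu}$; this is the content of \cite[Example~3.3]{AEE}, which I would cite rather than reprove.

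The heart is $(c)$. By Proposition~\ref{prop:symm} the full bimodule $D_1$ is isomorphic to $(D_1)^s\otimes_{C(\TT^2)}(C(\TT^2))_{\beta}$ for some $\beta\in\Aut(C(\TT^2))$, so it suffices to realize this concretely and identify $\beta$ with $\alpha'$. The obstruction to matching $(D_1)^s$ with $M^c$ directly is that their defining cocycles differ: elements of $D_1$ satisfy $f(x+1,y)=e(c(y-\nu))f(x,y)$ whereas elements of $M^c$ satisfy $f(x+1,y)=e(cy)f(x,y)$. The key device is to absorb this discrepancy into a fixed unimodular phase: setting $u(x,y)=e(-c\nu x)$ one has $u(x+1,y)=e(-c\nu)u(x,y)$, so $f\mapsto uf$ carries $M^c$ bijectively onto the function space underlying $D_1$. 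I would then define $\Psi\colon M^c\otimes_{C(\TT^2)}(C(\TT^2))_{\alpha'}\to D_1$ by $\Psi(f)=(uf)\delta_1$ and verify, using the explicit formulas for $M^c\otimes A_{\alpha'}$ from Section~\ref{sec:pic} together with $|u|\equiv 1$, that $\Psi$ intertwines the two left actions, the two left inner products $\big(\langle uf,ug\rangle_L=|u|^2 f\overline g=f\overline g\big)$, the right action $\big(\Psi(f\cdot\phi)=uf\,(\phi\circ\lambda)\delta_1=\Psi(f)\cdot\phi$, which is what forces $\alpha'(\phi)=\phi\circ\lambda\big)$, and the two right inner products $\big(\lambda^{-1}(\overline f g)=\alpha'^{-1}(\overline f g)$ closes the computation$\big)$. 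The only genuinely delicate step is bookkeeping the conventions so that the automorphism which emerges is precisely the one induced by $\alpha(x,y)=(x+2\mu,y+2\nu)$; once the phase $u$ is chosen correctly, every identity collapses to the cancellation $|u|^2=1$ and the commutativity of $C(\TT^2)$.
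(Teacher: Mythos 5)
Your proof is correct, and while parts $(a)$ and $(b)$ follow the paper's own route (a routine pointwise verification for $(a)$, and for $(b)$ an appeal to \cite[Example~3.3]{AEE} together with the semi-saturation/grading machinery of \cite{AEE}), your argument for $(c)$ is genuinely different from the paper's. The paper defines $\Phi(g)=\widetilde g$ with $\widetilde g(x,y)=g(x,y+\nu)$, a shift in the $y$-variable; this carries the $D_1$-cocycle $e(c(y-\nu))$ to the $M^c$-cocycle $e(cy)$ but at the cost of twisting the left action as well, so one only obtains $D_1\cong C(\TT^2)_{\delta'}\otimes M^c\otimes C(\TT^2)_{\tau'}$ with $\delta(x,y)=(x,y+\nu)$, $\tau(x,y)=(x+2\mu,y+\nu)$, and the proof must then invoke \cite[Proposition~1.15]{AE} (a symmetric bimodule commutes past $A_{\delta'}$ when $\delta'$ is homotopic to the identity) to collect the twists into the single factor $C(\TT^2)_{(\tau\delta)'}=C(\TT^2)_{\alpha'}$. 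You instead multiply by the unimodular function $u(x,y)=e(-c\nu x)$, whose periodicity defect $u(x+1,y)=e(-c\nu)u(x,y)$ exactly converts the $M^c$-cocycle into the $D_1$-cocycle; since $u$ acts by pointwise multiplication and $|u|^2\equiv 1$, the map $f\mapsto (uf)\delta_1$ intertwines both actions and both inner products on the nose, giving a direct bimodule isomorphism $M^c\otimes C(\TT^2)_{\alpha'}\cong D_1$ with no auxiliary twists and no need for \cite[Proposition~1.15]{AE} (indeed your preliminary appeal to Proposition~\ref{prop:symm} is not even needed, since $\Psi$ is self-contained). The two arguments encode the same underlying fact --- the discrepancy $e(-c\nu)$ between the cocycles is a coboundary, equivalently $\delta'$ is homotopic to the identity --- but yours exhibits the implementing coboundary explicitly and is more elementary, whereas the paper's stays closer to the formalism of \cite{AE} that it cites.
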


\begin{proof}
It is routine to check that $M^c$ is a symmetric $C(\TT^2)$-bimodule with pointwise actions and canonical inner products.
Also, it is shown in \cite[Example~3.3]{AEE} that the circle action $\gamma$ in \eqref{eq:action_gamma} is semi-saturated, thus \cite[Theorem~3.1]{AEE} gives $D^c_{\mu\nu}\cong C(\TT^2)\rtimes_{D_1}\Z$.

To see $(c),$ let $\Phi: g\mapsto \widetilde{g}$ be the map defined in \cite[\S2]{AE} by $\widetilde{g}(x,y)=g(x,y+\nu)$ for $(x,y)\in \RR\times \TT$. Then $\Phi$ provides an isomorphism of Hilbert $C(\TT^2)$-bimodules between $D_1$ and $C(\TT^2)_{\delta'}\otimes M^c\otimes C(\TT^2)_{\tau'}$, where $\delta',\tau'\in\Aut(C(\TT^2))$ are induced from the maps $\delta, \tau :\TT^2\to \TT^2$ defined by $\delta(x,y)=(x,y+\nu)$ and $\tau(x,y)=(x+2\mu,y+\nu)$.  Since $M^c$ is symmetric and $\delta'$ is homotopic to the identity,  \cite[Proposition~1.15]{AE} implies
 \[\begin{split}
 D_1 &\cong C(\TT^2)_{\delta'}\otimes M^c\otimes C(\TT^2)_{\tau'}\cong M^c\otimes C(\TT^2)_{\delta'}\otimes C(\TT^2)_{\tau'}\\
 &\cong M^c\otimes C(\TT^2)_{(\tau\delta)'}\cong M^c\otimes C(\TT^2)_{\alpha'},
 \end{split}\]
  where $\alpha'\in\Aut(C(\TT^2))$ is induced from the map $\alpha(x,y)=(x+2\mu,y+2\nu)$.
\end{proof}

\begin{rmk}\label{rmk:line-bundle}
 As a left module over $C(\TT^2)$, $M^c$ corresponds to the module denoted by $X(1,-c)$ in \cite[Section 3.7]{Rie2}.
  The same reference shows that one can construct the corresponding complex line bundle $M^c$ over $\TT^2$ by using Mayer-Vietoris sequences for $K$-theory and the ``clutching'' construction for vector bundles. (Details can be found in \cite[p.298]{Rie2}).
\end{rmk}

\section{Twisted Groupoid $C^*$-algebras}\label{section_twist}

One of the main results in \cite{DKM} is that for each twist $\Lambda$ over $\Gamma$, the groupoid $C^*$-algebra $C^*(\Gamma;\Lambda)$ is isomorphic to the Cuntz-Pimsner algebra $\mathcal{O}_M$, where the $C^*$-correspondence $M$ is constructed from the twist $\Lambda$ (see \cite[Theorem~3.3]{DKM} for further details). Since our main theorem (Theorem~\ref{thm_main}) heavily depends on this result, we give a brief review of this work below.

Let $X$ be a second countable, locally compact, Hausdorff space, let $\sigma:X\rightarrow X$ be a local homeomorphism and let
\[\Gamma=\Gamma(X,\sigma)=\{(x,k-l,y)\in X\times\Z\times X : \sigma^k(x)=\sigma^l(y)\;\;\text{for}\;\;k,l\ge 0\}.\]
The operations on $\Gamma$ are given as follows:
\[r(x,m,y)=x,\;\;s(x,m,y)=y,\;\;(x,m,y)(y,n,z)=(x,m+n,z)\]
\[(x,m,y)^{-1}=(y,-m,x).\]

It is shown in \cite[Theorem~1]{D1} that $\Gamma$ is an \'{e}tale groupoid with the Haar system which is given by the counting measures on the set $r^{-1}(x)$ for $x\in X$.

\begin{rmk}
When $\sigma$ is a homeomorphism, it is routine to check that $\Gamma$ is the transformation groupoid, $X\rtimes_\sigma \ZZ$.
\end{rmk}

Also, it is shown in \cite{DKM} that the groupoid $C^*$-algebra $C^*(\Gamma)$ is isomorphic to the Cuntz-Pimsner algebra associated with the $C^*$-correspondence over $C_0(X)$, which is denoted by $\ell^2(\sigma)$. (The case for $X$ is compact is established in \cite[Proposition~3.3]{D2}). The $C^*$-correspondence $\ell^2(\sigma)$ is described as follows.

Let $\ell^2(\sigma)$ be the completion of the right pre-Hilbert $C_0(X)$-module $C_c(X)$ equipped with the following action and inner-product: for $\xi,\eta\in C_c(X)$ and $g\in C_0(X)$,
\[
\xi\cdot g(x)=\xi(x)g(\sigma(x))\quad\text{and}\quad \langle\xi,\eta\rangle(x)=\sum_{\sigma(y)=x}\overline{\xi}(y)\eta(y).
\]
The left action $\phi$ of the algebra $C_0(X)$ on $\ell^2(\sigma)$ is given by, for $g\in C_0(X)$ and $\xi\in C_c(X)$,
\[
(\phi(g) \cdot \xi)(x)=g(x)\xi(x).
\]
Then $\ell^2(\sigma)$ becomes a $C^*$-correspondence over $C_0(X)$ and we have $C^*(\Gamma)\cong \mathcal{O}_{\ell^2(\sigma)}$.

\begin{dfn}\cite{Ren1}\label{def_twist}
Let $\Gamma$ be an \'{e}tale groupoid. A twist over $\Gamma$ is a topological groupoid $\Lambda$ with $\Lambda^0=\Gamma^0$ such that
the following conditions are satisfied:
\begin{itemize}
\item[(a)] There is a sequence of unit space preserving groupoid maps
\[\Gamma^0\times\TT\xrightarrow[\iota]\quad\Lambda\xrightarrow[\pi]\quad \Gamma,\]
where $\iota$ is injective and $\pi$ is surjective, and $\TT$ is the circle group.
\item[(b)] For $\lambda\in\Lambda$ and $z\in\TT$, $\lambda \iota(s(\lambda),z)=\iota(r(\lambda),z)\lambda$.
\item[(c)]  If $\pi(\lambda_1)=\pi(\lambda_2)$ then $\lambda_2=i(r(\lambda_1),z)\lambda_1$ for some $z\in\TT$.
\end{itemize}
\end{dfn}
It follows that $\TT$ acts freely on $\Lambda$ with quotient $\Lambda/\TT\cong\Gamma$, thus $\Lambda$ can be viewed as a principal $\TT$-bundle over $\Gamma$.

\begin{rmk}\label{rmk:comp}
Let $\Gamma$ be the transformation groupoid $X\rtimes_{\alpha}\ZZ$ associated to the compact Hausdorff space $X$ and homeomorphism $\alpha:X\to X$. The unit space space $\Gamma^{(0)}$ of $\Gamma$ is given by $\Gamma^{(0)}=X\times\{0\},$  which can be identified with $X.$ For $(x,n)\in\Gamma$, the range and source maps are given by $r((x,n))=x$ and $s((x,n))=\alpha^n(x)$. The composition and inverse formulas are given by $(x,n)\cdot (\alpha^n(x),m)=(x,n+m)$ and $(x,n)^{-1}=(\alpha^n(x),-n)$.
\end{rmk}

Given a twist $\Lambda$ over $\Gamma$, we can consider $\Lambda$ as a groupoid in its own right, and form its groupoid $C^*$-algebra $C^*(\Lambda)$, which is the completion of $C_c(\Lambda)$ equipped with the standard convolution. The twisted groupoid $C^*$-algebra $C^*(\Gamma;\Lambda)$
 is defined in \cite{DKM} to be the closure in $C^*(\Lambda)$ of $\{g\in C_c(\Lambda)\mid g(z\lambda)=zg(\lambda)\;\text{for}\; z\in\TT, \lambda\in\Lambda\}$.

\begin{rmk}
 A twist is called a topological twist in \cite{Ku2}, a twisted groupoid in \cite{Ren1}, and  a $\TT$-groupoid in \cite{MW}.
The corresponding constructions of (twisted) groupoid $C^*$-algebras are a bit different from one another, and readers are referred to Appendix A of \cite{aHCR} for details.
\end{rmk}

It is shown in \cite[Theorem~3.3]{DKM} that $C^*(\Gamma;\Lambda)$ is isomorphic to the Cuntz-Pimsner algebra associated to $\mathcal{H}$,  where $\mathcal{H}$ is the $C^*$ correspondence described in the paragraphs that follow.

Let $T$ be a circle bundle over $X$ with $p: T\rightarrow X$ and let $T\times_{\TT}\CC$ be the associated complex line bundle given by $T\times\CC/\negthickspace\sim$, where $(t,w)\sim (zt,zw)$ for $(t,w)\in T\times\CC$ and $z\in \TT$.
Then the space of continuous sections of $T\times_{\TT}\CC$ that vanish at infinity on $T$ denoted by $L_T$ may be identified with
\[\{f:T\rightarrow \CC\mid f(zt)=zf(t)\;\;\text{for}\;\;z\in\TT,t\in T\}.\]

In fact, $L_T$ can be viewed a $C_0(X)$-bimodule with the following actions and inner products.
 As a right $C_0(X)$-module,  for $g\in C_0(X)$, $\xi\in L_T$ and $t\in T$, the right action and the inner product are given by
 \[\xi\cdot g(t):=\xi(t)g(p(t)), \quad \langle\xi,\eta\rangle_R(p(t))=\overline{\xi}(t)\eta(t).\]
 For $g\in C_0(X)$, $\xi\in L_T$ and $t\in T$, the left action $\phi_L$ of $C_0(X)$ and the left inner product are given by
 \[(\phi_L(g)\cdot \xi)(t)=g(p(t))\xi(t)=\xi(t)g(p(t)),\]
 \[\langle\xi,\eta\rangle_L(p(t))=\xi(t)\overline{\eta}(t).\]
 It is straightforward to check the conditions in Definition \ref{symm-module} to see that $L_T$ is a symmetric Hilbert $C_0(X)$-bimodule.

 Since $\ell^2(\sigma)$ is a $C^*$-correspondence over $C_0(X)$, the tensor product $\mathcal{H}:=L_T\otimes_{C_0(X)}\ell^2(\sigma)$ is a right Hilbert $C_0(X)$-module.
Moreover, $\mathcal{H}$ can be viewed as the completion of the compactly supported sections in $L_T$ with the following structure: for $\xi,\eta\in L_T$ and $g\in C_0(X)$,
\[
\langle\xi,\eta\rangle_R(x)=\sum_{\sigma(p(t))=x}\overline{\xi}(t)\eta(t),
\]
\[
\xi\cdot g(t)=\xi(t)g(\sigma(p(t)))\quad\text{and}\quad (\phi_{\mathcal{H}}(g)\cdot\xi)(t)=g(p(t))\xi(t).
\]

Then \cite[Theorem~3.3]{DKM} gives that $\mathcal{O}_{\mathcal{H}}$ is isomorphic to $C^*(\Gamma;\Lambda_T)$, where $\Lambda_T$ is the twist over $\Gamma$ determined by the circle bundle $T$ over $X$. According to \cite[Theorem 3.1]{DKM}, there is one-to-one correspondence between the isomorphism classes of the twists $\Lambda_T$ over $\Gamma$ and the isomorphism classes of the principle circle bundles $T$ over $X$.
In particular, if we define a map $j: X\to\Gamma$ by $j(x)=(x,1,\sigma(x))$, then for a twist $\Lambda$ over $\Gamma$ the corresponding circle bundle $T$ over $X$ can be obtained by $j^*(\Lambda)$, the pull-back of $\Lambda$ by $j$. Also the proof of \cite[Theorem 3.1]{DKM} shows how to construct the twist $\Lambda_T$ explicitly from the given circle bundle $T$ over $X$.

Now we state the main theorem of this paper as follows.

\begin{thm}\label{thm_main}
Let $A$ be a unital commutative $C^*$-algebra. Let $X$ be the spectrum of $A$. Let $\alpha: X\to X$ be a homeomorphism, and let $\Gamma$ be the transformation groupoid $X\rtimes_{\alpha}\ZZ$. Suppose that  $T$ is a circle bundle over $X$, that $\Lambda_T$ is a twist over $\Gamma$ such that $j^*(\Lambda)\cong T$, and that $L_T$ is the space of continuous sections of $T\times_{\TT}\CC$ that vanish at infinity on $T$.
Then
\[
A\rtimes_M \ZZ \cong C^*(\Gamma;\Lambda_T),
\]
where $M\cong L_T\otimes A_{\alpha'}$ and $\alpha'\in \Aut(A)$ is induced from $\alpha$.
\end{thm}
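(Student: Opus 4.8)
The plan is to factor the desired isomorphism through Cuntz-Pimsner algebras: on one side I use the identification of a crossed product by a Hilbert $C^*$-bimodule with its Cuntz-Pimsner algebra, on the other side the main result of \cite{DKM}, and then I match the two underlying $C^*$-correspondences. Concretely, I would show
\[
A\rtimes_M\ZZ\cong\mathcal{O}_M,\qquad C^*(\Gamma;\Lambda_T)\cong\mathcal{O}_{\mathcal H},\qquad M\cong\mathcal H,
\]
where $\mathcal H=L_T\otimes_A\ell^2(\sigma)$, and then compose.

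First I would note that $M=L_T\otimes_A A_{\alpha'}$ is a full Hilbert $A$-bimodule (an imprimitivity bimodule): $L_T$ is symmetric and full, $A_{\alpha'}$ is an imprimitivity bimodule, and the balanced tensor product of imprimitivity bimodules is again one. Hence the discussion closing Section~\ref{sec:prem}, which identifies $\mathcal{O}_M$ with $A\rtimes_M\ZZ$ for a Hilbert $C^*$-bimodule $M$, gives $A\rtimes_M\ZZ\cong\mathcal{O}_M$. On the other side, since $\alpha$ is a homeomorphism the Renault-Deaconu groupoid $\Gamma$ is the transformation groupoid $X\rtimes_\alpha\ZZ$, so $\sigma=\alpha$, and \cite[Theorem~3.3]{DKM} yields $C^*(\Gamma;\Lambda_T)\cong\mathcal{O}_{\mathcal H}$ with $\mathcal H=L_T\otimes_A\ell^2(\alpha)$.

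The heart of the argument is the identification $\ell^2(\alpha)\cong A_{\alpha'}$ as $C^*$-correspondences over $A=C(X)$. When $\sigma=\alpha$ is a homeomorphism, the defining inner product $\langle\xi,\eta\rangle_R(x)=\sum_{\alpha(y)=x}\overline{\xi}(y)\eta(y)$ collapses to the single term $\overline{\xi}(\alpha^{-1}(x))\eta(\alpha^{-1}(x))$, and the right action becomes $\xi\cdot g(x)=\xi(x)g(\alpha(x))$. Comparing with the formulas defining $A_{\alpha'}$, namely left action by multiplication, $\xi\cdot a=\xi\,\alpha'(a)$, and $\langle\xi,\eta\rangle_R=(\alpha')^{-1}(\xi^*\eta)$, one sees that the identity map on $C_c(X)$ intertwines all of the structure once $\alpha'$ is taken to be the automorphism $\alpha'(g)=g\circ\alpha$ induced by $\alpha$; in particular the left actions agree, so this is an isomorphism of correspondences and not merely of right Hilbert modules.

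Finally I would invoke functoriality of the internal tensor product: the isomorphism $\ell^2(\alpha)\cong A_{\alpha'}$ induces $\mathcal H=L_T\otimes_A\ell^2(\alpha)\cong L_T\otimes_A A_{\alpha'}=M$, using that in both cases the left action is inherited from $L_T$. Since isomorphic correspondences have isomorphic Cuntz-Pimsner algebras, $\mathcal{O}_{\mathcal H}\cong\mathcal{O}_M$, and stringing the identifications together gives $A\rtimes_M\ZZ\cong\mathcal{O}_M\cong\mathcal{O}_{\mathcal H}\cong C^*(\Gamma;\Lambda_T)$. I expect the main obstacle to be bookkeeping with the two competing conventions for $\alpha'$, that is, the $\alpha$ versus $\alpha^{-1}$ ambiguity in passing from the homeomorphism to the induced automorphism and then to the associated bimodule, together with checking that the identification $\ell^2(\alpha)\cong A_{\alpha'}$ is genuinely compatible with the left actions, so that the tensor-product step truly produces an isomorphism of correspondences rather than only of right Hilbert $A$-modules.
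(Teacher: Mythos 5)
Your proof is correct and takes essentially the same route as the paper: one side is handled by \cite[Theorem~3.3]{DKM} and the other by Katsura's identification of $\mathcal{O}_M$ with $A\rtimes_M\ZZ$ for a Hilbert $A$-bimodule $M$. The only difference is that you explicitly verify the intermediate identification $\ell^2(\alpha)\cong A_{\alpha'}$ (with $\alpha'(g)=g\circ\alpha$) and hence $\mathcal{H}=L_T\otimes_A\ell^2(\alpha)\cong L_T\otimes_A A_{\alpha'}=M$ as correspondences, a step the paper leaves implicit when it asserts $C^*(\Gamma;\Lambda_T)\cong\mathcal{O}_M$ directly; your bookkeeping there is correct.
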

\begin{proof}
 First \cite[Theorem 3.3]{DKM} gives that $ C^*(\Gamma;\Lambda_T)\cong \mathcal{O}_M $.
Since $M$ is a Hilbert $A$-bimodule, we have $\mathcal{O}_M \cong A \rtimes_M\ZZ$ by \cite[Proposition~3.7]{Kat2}. This proves the desired result.
\end{proof}

To apply Theorem~\ref{thm_main} to quantum Heisenberg manifolds, we need to find a circle bundle $T$ over $\TT^2$ such that $L_T$ is isomorphic to the symmetric $C(\TT^2)$-bimodule $M^c$ of Proposition~\ref{prop:D1-module}. It turns out that we can recover $M^{-c}$ from the continuous $\CC$-valued functions $f$ on the Heisenberg manifold $N_{c}$ satisfying $f(zw)=zf(w)$ for $z\in \TT$ and $w\in N_{c}$. (See Proposition~\ref{prop:HM}).

 Let $H$ be the Heisenberg group, parametrized by
 \[
 (x,y,s)=\left (\begin{matrix} 1& y& s\\ 0 & 1& x\\ 0& 0& 1\end{matrix}\right ) \;\;\text{for } x,y,s\in \RR.
 \]
 When we identify $H$ with $\RR^3$, the product is given by
 \[
 (x,y,s)(x',y',s')=(x+x', y+y',s+s'+yx').
 \]

For any positive integer $c$, let $H_c=\{(x,y,z)\in H\mid x,y,cz\in\ZZ\}$. Then the Heisenberg manifold $N_c$ is the quotient $H/H_c$, on which $H$ acts on the left. Thus we can view $N_c$ as a circle bundle over $\TT^2$.
Now fix a positive integer $c$ and we reparametrize the Heisenberg group $H$ as
\[
(x,y,s)=\left( \begin{matrix} 1& y& s/c\\ 0& 1& x\\ 0& 0& 1\end{matrix}\right)\;\;\text{for } x,y,s\in \RR.
\]
Then the product on $\RR^3$ becomes
\[
(x,y,s)(x',y',s')=(x+x',y+y', s+s'+cyx'),
\]
and $H_c=\{(x,y,s)\in H\mid x,y,s\in \ZZ\}$. Thus for $(x,y,s)\in H$ and $(k,m,n)\in H_c$, we have
\[
(x,y,s)(k,m,n)=(x+k,y+m, s+n+cky).
\]
Hence for $f\in C(\RR^3)$, the right translation of $f$ by $(k,m,n)\in H_c$ is given by
\[
f((x,y,s)\cdot (k,m,n))=f(x+k, y+m, z+n+cky).
\]

\begin{prop}\label{prop:HM}
Suppose $M^c$ is the symmetric $C(\TT^2)$-bimodule of Proposition~\ref{prop:D1-module} and $N_c$ is the Heisenberg manifold defined as above. Then
\[
M^c\cong \{f:N_{-c}\to \CC \mid f(zw)=zf(w)\;\text{for}\; z\in \TT, w\in N_c\}.
\]
\end{prop}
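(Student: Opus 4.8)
The plan is to lift functions on $N_{-c}$ to functions on the group $H\cong\RR^3$ that are periodic under the right action of $H_{-c}$, and then to use the circle-equivariance to eliminate the fibre variable, landing exactly in $M^c$. To set this up I would first make the circle-bundle structure explicit. The centre $Z=\{(0,0,t):t\in\RR\}$ of $H$ acts on $N_{-c}=H/H_{-c}$ from the left, and since $Z\cap H_{-c}=\{(0,0,n):n\in\ZZ\}$, this descends to a free $\TT$-action (with $z=e(t)$) given by $z\cdot((x,y,s)H_{-c})=(x,y,s+t)H_{-c}$, whose orbit space $H/(Z\cdot H_{-c})$ is $\TT^2$; the bundle projection $p\colon N_{-c}\to\TT^2$ sends $(x,y,s)H_{-c}$ to $(x,y)$. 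Thus the right-hand side of the proposition is precisely the section space $L_T$ associated to the circle bundle $T=N_{-c}$.

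Next I would build the correspondence $f\mapsto h$. A continuous $f\colon N_{-c}\to\CC$ lifts to a continuous $\tilde f$ on $\RR^3$ that is right $H_{-c}$-invariant; with the $(-c)$-product this reads $\tilde f(x+k,y+m,s+n-cky)=\tilde f(x,y,s)$ for $k,m,n\in\ZZ$. The equivariance $f(zw)=zf(w)$ becomes $\tilde f(x,y,s+t)=e(t)\tilde f(x,y,s)$, so $\tilde f(x,y,s)=e(s)h(x,y)$ where $h(x,y):=\tilde f(x,y,0)$. Substituting into the invariance relation and cancelling $e(s)$ and $e(n)=1$ leaves $e(-cky)h(x+k,y+m)=h(x,y)$; taking $k=0$ yields $1$-periodicity in $y$, while $m=0,\ k=1$ yields $h(x+1,y)=e(cy)h(x,y)$. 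These are exactly the defining conditions of $M^c$. Conversely, given $h\in M^c$ one sets $\tilde f(x,y,s)=e(s)h(x,y)$ and checks, by reversing this computation (using $h(x+k,y)=e(cky)h(x,y)$), that $\tilde f$ descends to $N_{-c}$ and satisfies the equivariance; hence $f\mapsto h$ is a linear bijection onto $M^c$, with boundedness of $h$ guaranteed by compactness of $N_{-c}$.

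Finally I would verify that $f\mapsto h$ respects the symmetric $C(\TT^2)$-bimodule structure. The $C(\TT^2)$-action on $L_T$ is multiplication pulled back through $p$, and since $\phi(p(w))$ ignores the fibre variable this matches the pointwise action on $M^c$; because the module is symmetric it suffices to match one action. For the inner product, $\langle f,g\rangle_R(p(w))=\overline{\tilde f}(w)\tilde g(w)=\overline{h_f}(x,y)h_g(x,y)$, which is independent of $s$, equals the canonical right inner product on $M^c$, and is seen to be $1$-periodic in $x$ (hence in $C(\TT^2)$) using $h(x+1,y)=e(cy)h(x,y)$ for both factors.

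The one step worth flagging is the index bookkeeping: it is precisely the sign of the cocycle term in the $(-c)$-Heisenberg product that turns $e(-cky)$ into the factor $e(cy)$ appearing in $h(x+1,y)=e(cy)h(x,y)$, which is why the manifold $N_{-c}$ — rather than $N_c$ — yields $M^c$. Everything else reduces to checking that the lift and restriction maps are mutually inverse and continuous.
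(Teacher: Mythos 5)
Your proof is correct and follows essentially the same route as the paper's: both eliminate the fibre variable via the equivariance condition (writing $\tilde f(x,y,s)=e(s)h(x,y)$, or equivalently setting $s=0$) and reduce the lattice-invariance to exactly the quasi-periodicity conditions $h(x,y+1)=h(x,y)$ and $h(x+1,y)=e(cy)h(x,y)$ defining $M^c$. The only differences are cosmetic: the paper computes on $N_c$, obtains $M^{-c}$, and then replaces $c$ by $-c$, whereas you work directly with the $(-c)$-product on $N_{-c}$; you also spell out the converse direction and the $C(\TT^2)$-bimodule compatibility, both of which the paper leaves implicit.
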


\begin{proof}
Since the continuous functions on $N_c$ are invariant under the right action by $H_c$, $f\in C(N_c)$ should satisfy, for $(k,m,n)\in H_c$,
\[
f((x,y,s)\cdot (k,m,n))=f(x,y,s).
\]
In particular, $f\in C(N_c)$ satisfies
\begin{itemize}
\item[(i)] $f(x,y,s)=f((x,y,s)\cdot (1,0,0))=f(x+1,y, s+cy)$,
\item[(ii)] $f(x,y,s)=f((x,y,s)\cdot (0,1,0))=f(x,y+1,s)$,
\item[(iii)] $f(x,y,s)=f((x,y,s)\cdot (0,0,1))=f(x,y,s+1)$.
\end{itemize}

Let $z=e^{2\pi it}$ for some $t\in \RR$. Define an action of $\TT$ on $N_c$ by
\[
z\cdot (x,y,s)=(x,y,s+t).
\]
 Then for $z\in \TT$ and $w\in N_c$, the condition $f(zw)=zf(w)$ gives
\begin{itemize}
\item[(iv)] $f(x,y,s+t)=e^{2\pi it}f(x,y,s)$.
\end{itemize}
Set $s=0$. Then (i) and (iv) give
\begin{itemize}
\item[(v)] $f(x,y,0)=f(x+1,y,cy)=e^{2\pi i cy} f(x+1,y,0)$.
\end{itemize}
Also (ii) and (iii) give
\begin{itemize}
\item[(vi)] $f(x,y+1,0)=f(x,y,0)=f(x,y,1)$.
\end{itemize}
Now define $F:\RR^2\to \CC$ by $F(x,y)=f(x,y,0)$. Then we have
\[\begin{split}
&\{f:N_c\to \CC\mid f(zw)=zf(w)\;\;\text{for}\; z\in \TT, w\in N_c\}\\
&\cong\{F:\RR^2\to \CC\mid F(x,y+1)=F(x,y), F(x+1,y)=e^{-2\pi icy}F(x,y)\}\\
&=\{F:\RR\times\TT \to \CC \mid F(x+1,y)=e^{-2\pi icy}F(x,y)\}=M^{-c}.
\end{split} \]
 Thus replacing $c$ by $-c$ gives the desired result.
\end{proof}

By combining Proposition~\ref{prop:HM} and \cite[Theorem~3.1]{DKM}, we obtain the following result for the quantum Heisenberg manifold as a corollary of Theorem~\ref{thm_main}.

\begin{cor}\label{Cor:main1}
Let $D^{c}_{\mu\nu}$ be the quantum Heisenberg manifold. Let $\alpha$ be the homeomorphism on $\TT^2$ defined by $\alpha(x,y)=(x+2 \mu,y+2 \nu)$ and $\Gamma= \TT^2\rtimes_{\alpha}\ZZ$. Then there exists a twist $\Lambda$ over $\Gamma$ such that
\[
D^{c}_{\mu\nu}\cong C^*(\Gamma;\Lambda).
\]
\end{cor}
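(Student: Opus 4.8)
The plan is to assemble the ingredients already in place and reduce the statement to a single application of Theorem~\ref{thm_main}. By Proposition~\ref{prop:D1-module}(b) we have $D^{c}_{\mu\nu}\cong C(\TT^2)\rtimes_{D_1}\ZZ$, so it suffices to realize the bimodule crossed product $C(\TT^2)\rtimes_{D_1}\ZZ$ as a twisted groupoid $C^*$-algebra. I would therefore set $A=C(\TT^2)$, so that $X=\TT^2$, and take the homeomorphism $\alpha(x,y)=(x+2\mu,y+2\nu)$ with $\Gamma=\TT^2\rtimes_{\alpha}\ZZ$, exactly as in the hypotheses of Theorem~\ref{thm_main}.

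The crucial step is to produce the circle bundle $T$ over $\TT^2$ whose section module $L_T$ recovers the symmetric bimodule $M^c$. For this I would take $T$ to be the Heisenberg manifold $N_{-c}$, viewed as a circle bundle over $\TT^2$. Proposition~\ref{prop:HM} identifies $M^c$ with the module $\{f:N_{-c}\to\CC\mid f(zw)=zf(w)\}$, and this is precisely the description of $L_T$ for $T=N_{-c}$ recorded in Section~\ref{section_twist}; hence $L_{N_{-c}}\cong M^c$. The sign reversal $c\mapsto -c$ has to be carried correctly, but it is pinned down by Proposition~\ref{prop:HM}. With the circle bundle in hand, the correspondence of \cite[Theorem~3.1]{DKM} furnishes a twist $\Lambda:=\Lambda_{N_{-c}}$ over $\Gamma$ satisfying $j^*(\Lambda)\cong N_{-c}$.

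Applying Theorem~\ref{thm_main} with $T=N_{-c}$ then gives $C(\TT^2)\rtimes_M\ZZ\cong C^*(\Gamma;\Lambda)$, where $M\cong L_{N_{-c}}\otimes C(\TT^2)_{\alpha'}\cong M^c\otimes C(\TT^2)_{\alpha'}$ and $\alpha'$ is induced from $\alpha$. By Proposition~\ref{prop:D1-module}(c) this bimodule is exactly $D_1$, so $M\cong D_1$ and $C(\TT^2)\rtimes_M\ZZ=C(\TT^2)\rtimes_{D_1}\ZZ$. Chaining the isomorphisms yields $D^{c}_{\mu\nu}\cong C(\TT^2)\rtimes_{D_1}\ZZ\cong C^*(\Gamma;\Lambda)$, as claimed. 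Once the earlier propositions are granted this is essentially bookkeeping; the one point I would verify most carefully is that the bimodule $M$ delivered by Theorem~\ref{thm_main} agrees with $D_1$ on the nose, which rests on the twisting automorphism $\alpha'$ being the same on both sides and on the sign convention relating $N_{-c}$ to $M^c$.
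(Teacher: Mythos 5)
Your proof is correct and follows essentially the same route as the paper's own argument: Proposition~\ref{prop:D1-module} to write $D^{c}_{\mu\nu}\cong C(\TT^2)\rtimes_{D_1}\ZZ$ with $D_1\cong M^c\otimes C(\TT^2)_{\alpha'}$, Proposition~\ref{prop:HM} to identify $M^c$ with the section module $L_{N_{-c}}$ of the circle bundle $N_{-c}$ over $\TT^2$, the correspondence of \cite[Theorem~3.1]{DKM} to produce the twist $\Lambda$ with $j^*(\Lambda)\cong N_{-c}$, and finally Theorem~\ref{thm_main} to conclude. Your added care about the sign convention $c\mapsto -c$ and the verification that the bimodule delivered by Theorem~\ref{thm_main} agrees with $D_1$ is exactly the bookkeeping the paper performs implicitly.
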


\begin{proof}
By Proposition~\ref{prop:D1-module}, we have that $D^{c}_{\mu,\nu}\cong C(\TT^2)\rtimes_{D_1}\ZZ$ and $D_1\cong M^c\otimes (C(\TT^2))_{\alpha'}$ where $\alpha'\in\Aut(C(\TT^2))$ is induced from the map $\alpha$.  Proposition~\ref{prop:HM} gives the circle bundle $N_{-c}$ over $\TT^2$ such that $M^c$ is isomorphic to the space of continuous sections of $N_{-c}\times_{\TT}\CC$ that vanish at infinity on $N_{-c}$. Then \cite[Theorem~3.1]{DKM} gives a twist $\Lambda$ such that $N_{-c}\cong j^*(\Lambda)$, where $j:\TT^2\to \Gamma$ is defined by $j(t)=(t,1,\alpha(t))$. Hence Theorem~\ref{thm_main} gives the desired result.
\end{proof}

 As a refinement of the construction described in \cite[Theorem~3.1]{DKM}, in the next section, we give a specific construction of a twist groupoid obtained from a transformation groupoid and a principal bundle over its unit space, by means of transition functions of the bundle.  This is meant to give a topological insight into the notion of twist groupoids.

\section{Construction of a groupoid from a $\mathbb Z$-transformation groupoid and a principal bundle over its unit space}

Let $X$ be a compact metric space, and let $\alpha:X\to X$ be a homeomorphism of $X$ onto itself.  Let the associated transformation groupoid
$\Gamma= X\times_{\alpha} \mathbb Z$ be as defined in Remark~\ref{rmk:comp}. Recall then that $(x,m)$ and $(y,n)$ are composable if and only if $y=\alpha^{m}(x)$ and the composition formula is given by $(x,m)\cdot (\alpha^m(x),n)\;=\;(x,m+n)$.

Let $G$ be a compact abelian group and $(E,p,X)$ be a locally trivial principal $G$-bundle with base space $X$, where $G$ acts freely and properly on $E$ on the left.
We will use this information to form a principal $G$-bundle over the groupoid $\Gamma$ that itself has a groupoid structure.  When $G=\TT,$ our construction produces the twist groupoid  \cite[Theorem~3.1]{DKM}, which was constructed via line bundles.  Although the construction that follows is more technical and involves more book-keeping, it can be applied to this more general setting.

The quotient space for the $G$-action on $E$ is exactly given by $X$ with the quotient map $p:E\to X$.
It is well-known (cf. \cite{Hus}) that one can construct from this set up a finite open cover $\{U_j: j\in \mathbb J\}$ of $X$ and transition functions $c_{i,j}:U_i\cap U_j\to G$ satisfying
\begin{itemize}
\item[(a)] $c_{i,i}(x)\equiv 1_G$ for $i \in\mathbb J;$
\item[(b)] $c_{i,j}(x)=[c_{j,i}(x)]^{-1}$ for $ i, j\in \mathbb J$ and $x\in U_i\cap U_j;$
\item[(c)] $c_{i,j}(x)c_{j,k}(x)=c_{i,k}(x)$ for $i, j, k\in \mathbb J$ and $x\in U_i\cap U_j\cap U_k.$
\end{itemize}
Moreover, up to equivalence of principal bundles, the bundle $E$ can be reconstructed from the finite open cover $\{U_j\}$ and the transition functions $\{c_{i,j}\}$ as follows.

Let $\Omega = \bigsqcup_{i\in \mathbb J} G\times U_i\times\{i\}$ and define an equivivalence relation on $\Omega$ by
\begin{equation}\label{eq:bundle}
(g_1,b_1,i)\sim(g_2,b_2,j)\;\text{if}\;\; b_1=b_2\in U_i\cap U_j,\;\;\text{and}\;\;g_1c_{i,j}(b_1)=g_2.
\end{equation}
Then $\Omega$ is a $G$-bundle over $ \bigsqcup_{i\in \mathbb J}U_i$. Setting $\widetilde{\Omega}$ to be the quotient space $\Omega/\negthickspace\sim,$  we obtain that $\widetilde{\Omega}$ is a principal $G$-bundle over $X$ that is equivalent to $E.$

Therefore, without loss of generality we will construct the principal $G$-bundle $\Lambda$ over $\Gamma$ by means of transition functions.
Recall that $\Gamma$ is locally compact and can be written as a disjoint union of compact sets, $\Gamma\;=\;\bigsqcup_{n\in\mathbb Z}X\times \{n\}.$
For each $n,$ we will construct a principal $G$-bundle $\Lambda_n$ over $X\times \{n\}=X$ by means of transition functions. The final bundle $\Lambda$ will be equal to $\bigsqcup_{n\in\mathbb Z} \Lambda_n.$

We define $\Lambda_0=G\times X\times \mathbb{J}^0\times \{0\}$, where the index set $\mathbb{J}^0:=\{1\}$ corresponds to the trivial open covering $\{X\}$ for the trivial $G$-bundle over $X$ and the trivial transition function $\{c^{0}_{1,1}(x)\equiv 1_G\}.$

Sitting inside $\Lambda_0$ we have the unit space of all
$\Lambda:$
\[\Lambda^{(0)}\;=\{(1_G,x,0): x\in X\}\;\cong X\times\{0\}=\Gamma^{(0)}.\]
The embedding of $\Lambda^{(0)}$ into $\Lambda_0$ is given by
$(1_G,x,0)\mapsto (1_G,x,1,0)\in\Lambda_0,$ where $1\in \J^0$. 

Given the finite open cover $\{U_i\}_{i\in \mathbb J}$ and transition functions $\{c_{i,j}=c^{(1)}_{i,j}:U_i\cap U_j\to G\}$ determining the principal bundle $(E, p, X)$ up to bundle equivalence, we define
$$\Lambda_1\;=\;\widetilde{\Omega}\times \{1\}.$$ The projection from $\widetilde{\Omega}$ onto $X$ extends to a projection
$\Pi_1$ from $\Lambda_1$ onto $X\times \{1\},$ which coincides with the quotient map for the $G$-action.

To define $\Lambda_n$, we need to take a refinement of the covering $\{U_i\}_{i\in\J}$. For a positive integer $n$, set $\J^n=\overbrace{\J\times\dots\times\J}^{\text{$n$-times}}$.  We form a finite open cover $\{U_{\vec{i}}\}_{\vec{i}\in\J^n}$ given by
\begin{equation}\label{eq:positive_oc}
U_{\vec{i}}=U_{\pi_1(\vec{i})} \cap \alpha^{-1}(U_{\pi_2(\vec{i})})\cap \cdots \cap  \alpha^{-(n-1)}(U_{\pi_n(\vec{i})})=\cap_{j=1}^{n}\alpha^{-(j-1)}(U_{\pi_j(\vec{i})}),
\end{equation}
where $\pi_j:\J^n\to \J$ is the projection map onto the $j^{th}$ variable.

For $\vec{i}$ and $\vec{j}\in {\mathbb J}^n,$ the transition functions $c^{(n)}_{\vec{i},\vec{j}}:U_{\vec{i}}\cap U_{\vec{j}}\to G$ are defined by
\begin{equation}\label{eq:positive_tf}
\begin{split}
c^{(n)}_{\vec{i},\vec{j}}(x)&=c_{\pi_1(\vec{i}),\pi_1(\vec{j})}(x)\cdot c_{\pi_2(\vec{i}),\pi_2(\vec{j})}(\alpha(x))\cdot \cdots \cdot c_{\pi_n(\vec{i}),\pi_n(\vec{j})}(\alpha^{n-1}(x))\\
&=\;\Pi_{k=1}^nc_{\pi_k(\vec{i}),\pi_k(\vec{j})}(\alpha^{k-1}(x))\;\;\text{for}\;\; x\in U_{\vec{i}}\cap U_{\vec{j}}.
\end{split}\end{equation}
One verifies that the $\{c^{(n)}_{\vec{i},\vec{j}}\}$ are transition functions for the open cover $\{U_{\vec{i}}\}_{\vec{i}\in \mathbb J^n}$.

The finite open cover $\{U_{\vec{j}}\}_{\vec{j}\in {\mathbb J}^n}$ and transition functions $\{c^{(n)}_{\vec{i},\vec{j}}\}$ defined on intersections again produce a principal $G$-bundle $\widetilde{\Omega}_n$ over $X,$ and we define
$\Lambda_n=\widetilde{\Omega}_n\times\{n\}.$ As with the case $n=1,$ the projection from $\widetilde{\Omega}_n$ onto $X$ extends to a projection
$\Pi_n$ from $\Lambda_n$ onto $X\times \{n\},$ which again coincides with the quotient map for the $G$-action. As in \eqref{eq:bundle}, for $n>0$, $(g,x,\vec{i},n) \in \Lambda_n$ is equivalent to $(h,x,\vec{j},n)\in \Lambda_n$ if $x\in U_{\vec{i}}\cap U_{\vec{j}}$ and $h=c^n_{\vec{i},\vec{j}}(x) g.$ 
We are left with finding the bundles over $X\times \{n\}\subseteq \Gamma$ in the case where $n<0.$  We start with $n=-1.$

Using the same index set $\mathbb J$ and same open cover $\{U_i\}_{i\in \mathbb J}$ for $X$ as before, set
$W_i=\alpha(U_i)$. Considering the open cover $\{W_i\}_{i\in \mathbb J},$ define transition functions $c^{(-1)}_{i,j}:W_i\cap W_j\to G$ by
\[
c^{(-1)}_{i,j}(x)\;=\;[c_{i,j}(\alpha^{-1}(x))]^{-1}\;\text{for}\; x\in W_i\cap W_j = \alpha(U_i\cap U_j).
\]
One verifies that $\{c^{(-1)}_{i,j}\}$ are transition functions for the open cover $\{W_i\}_{i\in \mathbb J},$ and this finite open cover and transition function pair gives a principal $G$-bundle $\widetilde{\Omega}_{-1}$ over $X,$ and we define
$\Lambda_{-1}=\widetilde{\Omega}_{-1}\times\{-1\}.$ As in the case with non-negative indices, there is a projection $\Pi_{-1}$ from $\Lambda_{-1}$ onto $X\times \{-1\}\subseteq \Gamma.$

Now for general negative integer $n$, let our index set be ${\mathbb J}^{|n|}$. For $\vec{i}\in{\mathbb J}^{|n|}$
define the open set $W_{\vec{i}}\subset X$ by
\begin{equation}\label{eq:negative_oc}
W_{\vec{i}}\;=\;W_{\pi_1(\vec{i})}\cap \alpha(W_{\pi_2(\vec{i})})\cap \cdots \cap  \alpha^{(|n|-1)}(W_{\pi_{|n|}(\vec{i})})=\cap_{k=1}^{|n|}\alpha^{(k-1)}(W_{\pi_k(\vec{i})}).
\end{equation}
One checks in the standard way that $\{W_{\vec{i}}\}_{\vec{i}\in {\mathbb J}^{|n|}}$ is a finite open cover for $X.$

 For $\vec{i}$ and $\vec{j}\in {\mathbb J}^{|n|},$ the transition functions $c^{(n)}_{\vec{i},\vec{j}}:W_{\vec{i}}\cap W_{\vec{j}}\to G$ are defined by
\begin{equation}\begin{split}\label{eq:negative_tf1}
c^{(n)}_{\vec{i},\vec{j}}(x)&=c^{(-1)}_{\pi_1(\vec{i}),\pi_1(\vec{j})}(x)\cdot c^{(-1)}_{\pi_2(\vec{i}),\pi_2(\vec{j})}(\alpha^{-1}(x))\cdot \cdots \cdot c^{(-1)}_{\pi_n(\vec{i}),\pi_n(\vec{j})}(\alpha^{-(|n|-1)}(x))\\
&=\;\Pi_{k=1}^{|n|}c^{(-1)}_{\pi_k(\vec{i}),\pi_k(\vec{j})}(\alpha^{-(k-1)}(x))\;\;\text{for}\;\; x\in W_{\vec{i}}\cap W_{\vec{j}}.
\end{split}\end{equation}

For $n<0$ and $x\in W_{\vec{i}}\cap W_{\vec{j}}$, one can calculate that
\begin{equation}\label{eq:negative_tf2}
c^{(n)}_{\vec{i},\vec{j}}(x)=\;\Pi_{k=1}^{|n|}[c_{\pi_k(\vec{i}),\pi_k(\vec{j})}(\alpha^{-k}(x))]^{-1}.
\end{equation}
As before, the finite open cover $\{W_{\vec{j}}\}_{\vec{j}\in {\mathbb J}^{|n|}}$ and transition functions $\{c^{(n)}_{\vec{i},\vec{j}}\}$ defined on intersections again produces a principal $G$-bundle $\widetilde{\Omega}_n$ over $X,$ and we define
$\Lambda_n=\widetilde{\Omega}_n\times\{n\}.$ The projection from $\widetilde{\Omega}_n$ onto $X$ again extends to a projection
$\Pi_n$ from $\Lambda_n$ onto $X\times \{n\},$ which coincides with the quotient map for the $G$ action. Also, for $n<0$, $(g,x,\vec{i},n) \in \Lambda_n$ is equivalent to $(h,x,\vec{j},n)\in \Lambda_n$ if $x\in W_{\vec{i}}\cap W_{\vec{j}}$ and $h=c^n_{\vec{i},\vec{j}}(x) g.$

For future reference, we note that for $n>0$ we can define the ``flip'' involution $\sigma_n:{\mathbb J}^n\to {\mathbb J}^n$ to reverse the order of the indices, i.e. define $\sigma_n$ by
\begin{equation}\label{eq:flip}
\sigma_n(i_1,i_2,\cdots, i_n)=(i_n,i_{n-1},\cdots, i_1).
\end{equation}
Then for $n>0$, $\vec{i},\;\vec{j}\in {\mathbb J}^n,$ and $x\in U_{\vec{i}}\cap U_{\vec{j}}$, one can show that
\begin{equation}\label{eq:flip_tf1}
c^{(n)}_{\vec{i},\vec{j}}(x)=c^{(-n)}_{\sigma_n(\vec{j}),\sigma_n(\vec{i})}(\alpha^n(x)).
\end{equation}
Similarly, for $n<0$ and $\vec{i},\;\vec{j}\in {\mathbb J}^{-n},$ if $x\in W_{\vec{i}}\cap W_{\vec{j}}$, one can show that
\begin{equation}\label{eq:negative_tf3}
c^{(n)}_{\vec{i},\vec{j}}(x)=c^{(-n)}_{\sigma_{|n|}(\vec{j}),\sigma_{|n|}(\vec{i})}(\alpha^{-|n|}(x)).
\end{equation}

  We note that for $n_1>n_2>0,$ the finite open cover $\{U_{\vec{i}}\}_{\vec{i}\in \mathbb J^{n_1}}$ of $X$ is a refinement of the finite open cover $\{U_{\vec{j}}\}_{\vec{j}\in \mathbb J^{n_2}}$ of $X,$ where for $\vec{i}\in \mathbb J^{n_1}$ the embedding is given by $U_{\vec{i}}\subseteq  U_{\Pi_{n_2}^{n_1}(\vec{i})}$ and the map $\Pi_{n_2}^{n_1}: {\mathbb J^{n_1}}\to {\mathbb J^{n_2}}$ is given by
\[
\Pi_{n_2}^{n_1}(\vec{i})=\;(\pi_1(\vec{i}),\pi_2(\vec{i}),\cdots, \pi_{n_2}(\vec{i}))\in {\mathbb J^{n_2}}.
\]

Similarly, if $n_1<n_2<0,$ one can show using the map $\Pi_{|n_2|}^{|n_1|}: {\mathbb J^{|n_1|}}\to {\mathbb J^{|n_2|}}$ that  the finite open cover $\{W_{\vec{i}}\}_{\vec{i}\in \mathbb J^{|n_1|}}$ of $X$ is a refinement of the finite open cover $\{W_{\vec{j}}\}_{\vec{j}\in \mathbb J^{|n_2|}}$ of $X.$

We are ready to proceed with the definition of our twist groupoid.
Let $\Lambda=\bigsqcup_{n\in \mathbb Z}\Lambda_{n}.$  The map $\Pi:\Lambda\to \Gamma=X\times \mathbb Z$ is defined piecewise on each $\Lambda_n$ by $\Pi_{|_{\Lambda_n}}=\Pi_n.$  The group $G$ acts on each `component' $\Lambda_n$ with quotient $X\times\{n\};$ in the case where $X$ and $G$ are connected, these are the actual components of $\Lambda.$ Recall that by definition of twist groupoids, the unit space of $\Lambda$ is given by $\Lambda^{(0)}\;=\{(1_G,x,0): x\in X\}\;\cong X\times\{0\}=\Gamma^{(0)}.$

For $n\geq 0$ and $(g,x, \vec{i}, n)\in \Lambda_n$ with $g\in G,\;\vec{i}\in {\mathbb J}^n,\;x\in  U_{\vec{i}}$, we have the range and source maps $r, s:\Lambda_n\to \Lambda^{(0)}$ given by
\[
r((g,x, \vec{i}, n))\;=\;(1_G,x,0),\;\;s((g,x, \vec{i}, n))\;=\;(1_G,\alpha^n(x),0).
\]
For $n<0$ and $(g,x, \vec{i}, n)\in \Lambda_n$ with $g\in G,\;\vec{i}\in {\mathbb J}^{|n|},\;x\in  W_{\vec{i}},$ we have the range and source maps $r, s:\Lambda_n\to \Lambda^{(0)}$  given by
\[
r((g,x, \vec{i}, n))\;=\;(1_G,x,0),\;\;s((g,x, \vec{i}, n))\;=\;(1_G,\alpha^n(x),0).
\]
One easily checks that these range and source maps are well-defined on equivalence classes in each $\Lambda_n$ determined by the transition functions.

In the following theorem, we define the groupoid product $(g,x,\vec{i},n)\cdot (g',x',\vec{i'},n')$
of $(g,x,\vec{i},n)\in \Lambda_n$ with $(g',x',\vec{i'},n')\in \Lambda_{n'}$ when $r((g,x,\vec{i},n))=s((g',x',\vec{i'},n')),$
i.e. $x'=\alpha^n(x).$

\begin{thm}\label{thm:twist_comp}
Let $X$, $G$, $\alpha$, $\Gamma$ and $\Lambda$ be as above with $\Lambda=\bigsqcup_{n\in\mathbb Z} \Lambda_n$.
Then $\Lambda$ is a topological groupoid, and its unit space $\Lambda^{(0)}\subset \Lambda_0$ can be identified with $X\times \{0\}=\Gamma^{(0)}$.
If $(g,x,\vec{i},n)\in \Lambda_n\subset \Lambda$ and $(g',x',\vec{i'},n')\in \Lambda_{n'}\subset \Lambda$ satisfy
$s((g,x,\vec{i},n))=r((g',x',\vec{i'},n')),$ i.e.  $\alpha^n(x)=x'$, then the formulas for the groupoid product are given case by case as follows.
\begin{itemize}
\item[(a)] If $n,\;n'\geq 0$, then $[(g,x,\vec{i},n)]\cdot [(g',x',\vec{i'},n')]= [(gg', x, (\vec{i},\vec{i'}), n+n')]$.
\item[(b)] If $n,\;n'\; <0$, then $[(g, x, \vec{i}, n)]\cdot [(g',x'=\alpha^n(x), \vec{i'}, n')]= [(gg', x, (\vec{i},\vec{i'}), n+n')]$.
\item[(c)] If $n>0$ and $n'=-n<0$, then \[[(g, x, \vec{i}, n)]\cdot [(g',x'=\alpha^n(x), \vec{i'}, n')]=[(c^{(n)}_{\vec{i},\sigma_n(\vec{i'})}(x)gg', x, 1,0)].\]
\item[(d)] If $n<0$ and $n'=-n >0$, then \[[(g, x, \vec{i}, n)]\cdot [(g',x'=\alpha^n(x), \vec{i'}, n')]=[(c^{(-n)}_{\vec{i'},\sigma_{-n}(\vec{i})}(\alpha^{n}(x))gg', x, 1, 0)].\]
\item[(e)] If  $n<0<n'<|n|,$ and $\vec{i}=(\vec{i_1},\vec{i_2})$ with $\vec{i_1}\in {\mathbb J}^{-n-n'}$ and $\vec{i_2}\in {\mathbb J}^{n'}$, then \[[(g,x,\vec{i},n)]\cdot [(g',\alpha^n(x),\vec{i'},n')]=\;[(c^{(n')}_{\vec{i'},\sigma_{n'}(\vec{i_2})}(\alpha^n(x))gg', x, \vec{i_1}, n+n')].\]
\item[(f)] If $n<0<n'$ with $|n|< n',$ and $\vec{i'}=\;(\vec{i_1'}, \vec{i_2'})\in {\mathbb J}^{n'}$ with $\vec{i_1'}\in {\mathbb J}^{-n}$ and $\vec{i_2'}\in {\mathbb J}^{n+n'},$ then \[[(g,x,\vec{i},n)]\cdot [(g',\alpha^n(x),\vec{i'},n')]=\;[(c^{(-n)}_{\vec{i_1'},\sigma_{-n}(\vec{i})}(\alpha^n(x))\cdot g\cdot g', x, \vec{i_2'}, n+n')].\]
\item[(g)] If $|n'| > n>0>n',$ and $(\vec{i_1'},\vec{i_2'})=\vec{i'}\in {\mathbb J}^{-n'}$ with
 $\vec{i_1'}\in  {\mathbb J}^{n},\; \vec{i_2'}\in {\mathbb J}^{-n'-n}$, then \[[(g,x,\vec{i},n)]\cdot [(g',\alpha^n(x),\vec{i'},n')]\;=\;[(c^{(n)}_{\vec{i},\sigma_n(\vec{i_1'})}(x)\cdot gg', x,  \vec{i_2'}, n+n')].\]
\item[(h)] If $n>|n'|>0>n',$ and  $(\vec{i_1},\vec{i_2})=\;\vec{i}\;\in\;{\mathbb J}^n$ with $\vec{i_1}\in {\mathbb J}^{n+n'}$ and $\vec{i_2}\in {\mathbb J}^{-n'}$, then  \[[(g,x,\vec{i},n)]\cdot [(g',\alpha^n(x),\vec{i'},n')]=\;[(c^{(-n')}_{\vec{i_2},\sigma_{-n'}(\vec{i'})}(\alpha^{n+n'}(x))\cdot gg', x, \vec{i_1}, n+n')].\]
\end{itemize}
Moreover for $[(g, x, \vec{i}, n)]\in \Lambda_n\subset \Lambda,$ the inverse operation maps $\Lambda_n$ to $\Lambda_{-n}$ and is given by:
\[
[(g, x, \vec{i}, n)]^{-1}\;=\; [(g^{-1}, \alpha^n(x), \sigma_{|n|}(\vec{i}),-n)].
\]
\end{thm}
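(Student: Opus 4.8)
The plan is to verify the groupoid axioms directly for the piecewise operations, organizing every case around a single multiplicativity property of the transition functions. The first step is to record the \emph{concatenation identity}: for $m,n\ge 0$, $\vec i,\vec j\in\J^m$, $\vec i',\vec j'\in\J^n$ and $x\in U_{(\vec i,\vec i')}\cap U_{(\vec j,\vec j')}$,
\[
c^{(m+n)}_{(\vec i,\vec i'),(\vec j,\vec j')}(x)=c^{(m)}_{\vec i,\vec j}(x)\,c^{(n)}_{\vec i',\vec j'}(\alpha^{m}(x)),
\]
which follows at once by splitting the product in \eqref{eq:positive_tf} at the index $m$ and reindexing the tail by $\alpha^{m}$; the case $m=0$ (using $c^{(0)}_{1,1}\equiv 1_G$) records that $(1,\vec i')$ and $\vec i'$ name the same bundle coordinate. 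The corresponding identity for negative indices comes from \eqref{eq:negative_tf2}, and the passage between positive and negative indices is governed by the flip relations \eqref{eq:flip_tf1} and \eqref{eq:negative_tf3}. Together these three facts are the engine behind all eight multiplication formulas.

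Granting them, I would first check that each formula $(a)$--$(h)$ is independent of the chosen representatives. Replacing $(g,x,\vec i,n)$ by the equivalent $(c^{(n)}_{\vec i,\vec k}(x)\,g,x,\vec k,n)$ and $(g',\alpha^{n}(x),\vec i',n')$ by $(c^{(n')}_{\vec i',\vec k'}(\alpha^{n}(x))\,g',\alpha^{n}(x),\vec k',n')$, the concatenation identity (in the like-sign cases $(a)$, $(b)$) shows that the two outputs differ precisely by the transition function $c^{(n+n')}$ relating the output indices, hence represent the same class; the mixed-sign cases $(c)$--$(h)$ run identically once the flip relations \eqref{eq:flip_tf1}, \eqref{eq:negative_tf3} are used to rewrite the stray transition-function factor in the matching index. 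The range and source identities $r(\lambda\mu)=r(\lambda)$, $s(\lambda\mu)=s(\mu)$ are then immediate, since in every case the product lies in $\Lambda_{n+n'}$ over the base point $x$ and $s(\mu)=(1_G,\alpha^{n'}(\alpha^{n}(x)),0)=(1_G,\alpha^{n+n'}(x),0)$.

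The unit and inverse axioms are short computations. For a unit $(1_G,x,1,0)\in\Lambda_0$, the $m=0$ instance of the concatenation identity collapses $(1,\vec i')$ to $\vec i'$ and gives $(1_G,x,1,0)\cdot\lambda=\lambda$ and $\lambda\cdot(1_G,\alpha^{n}(x),1,0)=\lambda$. For the inverse, take $\lambda=(g,x,\vec i,n)$ with $n>0$; then $\lambda\lambda^{-1}$ falls under case $(c)$ and equals
\[
[(c^{(n)}_{\vec i,\sigma_n(\sigma_n(\vec i))}(x)\,gg^{-1},x,1,0)]=[(c^{(n)}_{\vec i,\vec i}(x),x,1,0)]=[(1_G,x,1,0)]=r(\lambda),
\]
using $\sigma_n^2=\mathrm{id}$ and the cocycle condition $(a)$, which gives $c^{(n)}_{\vec i,\vec i}\equiv 1_G$; symmetrically $\lambda^{-1}\lambda$ falls under case $(d)$ and equals the unit at $s(\lambda)$, and the case $n<0$ is identical after exchanging the roles.

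The main obstacle is associativity $(\lambda\mu)\nu=\lambda(\mu\nu)$, since the sign pattern of $(n,n',n'')$ produces many configurations and each inner product may itself split among $(c)$--$(h)$. I would control this in two complementary ways. Conceptually, the bundle $\Lambda_n$ with transition functions \eqref{eq:positive_tf} is exactly the contracted product of the pulled-back principal $G$-bundles $(\alpha^{k-1})^{*}E$ for $k=1,\dots,n$ (and of the opposite torsors when $n<0$), and the stated product is the canonical concatenation of such contracted products; because $G$ is abelian this concatenation is a symmetric monoidal operation on $G$-torsors with inverses, so it is associative and no hidden obstruction can occur. Concretely, I would reduce each instance to an equality of the scalar factors $c^{(\cdot)}_{\cdot,\cdot}$ that accumulate on the two sides and verify it from the cocycle conditions $(a)$--$(c)$, the concatenation identity, and the flip relations, using that $\sigma_n$ and the refinement maps $\Pi^{n_1}_{n_2}$ intertwine the positive- and negative-index transition functions. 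Finally, topologizing $\Lambda=\bigsqcup_n\Lambda_n$ as the disjoint union of the principal $G$-bundles $\widetilde\Omega_n\times\{n\}$, continuity of the product and of the inverse follows on each chart where a fixed case applies from continuity of $\alpha$ and of the finitely many transition functions.
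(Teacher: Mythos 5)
Your proposal is correct, but it is organized quite differently from the paper's proof, and the difference is worth spelling out. The paper only \emph{verifies} well-definedness directly in the base cases (a)--(d) (and when $n$ or $n'$ is $0$); it then \emph{derives} the mixed-sign formulas (e)--(h) by decomposing one factor via (a) or (b) --- e.g.\ writing $(g,x,\vec i,n)=(1_G,x,\vec i_1,-m)\cdot(g,\alpha^{-m}(x),\vec i_2,-n')$ --- and imposing associativity together with the already-established cases (c), (d); finally it checks associativity explicitly in one typical sign pattern by reducing to a transition-function identity, leaving the remaining patterns and the consistency of (e)--(h) to the reader. You instead take all eight formulas as given and verify them uniformly, and your isolation of the concatenation identity $c^{(m+n)}_{(\vec i,\vec i'),(\vec j,\vec j')}(x)=c^{(m)}_{\vec i,\vec j}(x)\,c^{(n)}_{\vec i',\vec j'}(\alpha^{m}(x))$ as a standalone lemma is a genuine improvement in organization: in the paper this computation is embedded inside the case (a) and (b) arguments and re-done in variant forms, whereas for you it, together with \eqref{eq:negative_tf2}, \eqref{eq:flip_tf1} and \eqref{eq:negative_tf3}, drives every case at once. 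What the paper's route buys is an explanation of where the complicated factors in (e)--(h) come from (they are forced by associativity); what your route buys is a cleaner, non-circular logical structure and, via the torsor picture ($\Lambda_n$ as the contracted product of the bundles $(\alpha^{k-1})^*E$, with $\Lambda_{-n}$ the opposite torsor), a conceptual reason why associativity cannot fail. One caution on that last point: contracted products of torsors are associative only up to canonical isomorphism, while a groupoid product must be strictly associative, so the monoidal-category remark alone does not close the argument; it is your concrete fallback --- reducing each sign pattern to an identity among the scalars $c^{(\cdot)}_{\cdot,\cdot}$ and verifying it from the cocycle conditions, the concatenation identity, and the flip relations --- that actually finishes the proof, and that step is exactly the computation the paper carries out for its typical case. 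Since you specify that reduction and the tools it needs, the plan is sound; in terms of detail actually executed, both you and the paper leave the bulk of the case-checking to the reader.
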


\begin{proof}
We note if $n=0,$ then $x=x',$ and the definition
\[
(g,x,1,0)\cdot (g',x'=x,\vec{i'},n')=\;(gg',x'=x,\vec{i'},n')
\]
preserves equivalence classes. Likewise, if $n'=0,$ and $x'=\alpha^n(x)$, then for $n\in\ZZ$ the definition
\[
(g,x,\vec{i},n)\cdot (g',x'=\alpha^n(x),1,0)=\;(gg', x,\vec{i},n)
\]
preserves equivalence classes.

For (a), suppose $n,\;n'>0$. For $x\in U_{\vec{i}}$ and $x'\in U_{\vec{i'}}$, suppose $x'=\alpha^n(x)$. Then \eqref{eq:positive_oc} gives
\[\begin{split}
x=\alpha^{-n}(x') & \in  \alpha^{-n}(U_{\pi_1(\vec{i'})}\cap \alpha^{-1}(U_{\pi_2(\vec{i'})})\cap \cdots \cap \alpha^{-(n'-1)}(U_{\pi_{n'}(\vec{i'})})) \\
&=\alpha^{-n}(U_{\pi_1(\vec{i'})})\cap \alpha^{-(n+1)}(U_{\pi_2(\vec{i'})})\cap \cdots \cap  \alpha^{-(n+n'-1)}(U_{\pi_{n'}(\vec{i'})})).
\end{split}\]
It follows that
\[
x\in\;[U_{\pi_1(\vec{i})}\cap \cdots \cap \alpha^{-(n-1)}(U_{\pi_n(\vec{i})})]\cap [\alpha^{-n}(U_{\pi_1(\vec{i'})})\cap \cdots \cap  \alpha^{-(n+n'-1)}(U_{\pi_{n'}(\vec{i'})}))].
\]
This implies that
\[
x\in\;U_{\pi_1((\vec{i},\vec{i'}))}\cap \cdots \cap \alpha^{-(n-1)}(U_{\pi_n((\vec{i},\vec{i'}))})\cap \alpha^{-n}(U_{\pi_{n+1}((\vec{i},\vec{i'}))})\cap \cdots \cap  \alpha^{-(n+n'-1)}(U_{\pi_{n+n'}((\vec{i},\vec{i'}))}),
\]
i.e. $x\in\;U_{(\vec{i},\vec{i'})}.$

Now suppose that $(g,x,\vec{i},n)\sim (h,x,\vec{j},n)\in \Lambda_n$ and $(g',x'=\alpha^n(x),\vec{i'},n')\sim (h',x'=\alpha^n(x),\vec{j'},n')\in \Lambda_{n'}.$
We need to check that
\[(gg', x, (\vec{i},\vec{i'}), n+n')\;\sim\;(hh', x, (\vec{j},\vec{j'}), n+n')\;\in\;\Lambda_{n+n'}.\]
Since $(g,x,\vec{i},n)\sim (h,x,\vec{j},n),$ we know that $x\in U_{\vec{i}}\cap U_{\vec{j}}$ and $h=c^{(n)}_{\vec{i}, \vec{j}}(x)g,$ by \eqref{eq:positive_tf} that is
\[h=\Pi_{k=1}^nc_{\pi_k(\vec{i}),\pi_k(\vec{j})}(\alpha^{k-1}(x))g.\]
Similarly, since $(g',\alpha^n(x),\vec{i'},n')\sim (h',\alpha^n(x),\vec{j'},n'),$ we know that
$\alpha^n(x)\in U_{\vec{i'}}\cap U_{\vec{j'}}$ and $h'=c^{(n')}_{\vec{i'}, \vec{j'}}(\alpha^n(x))g',$ i.e. $h'=\Pi_{k=1}^{n'}c_{\pi_k(\vec{i'}),\pi_k(\vec{j'})}(\alpha^{k-1}(\alpha^n(x)))g',$ that is,
\[h'=\Pi_{k=n+1}^{n+n'}c_{\pi_k(\vec{i'}),\pi_k(\vec{j'})}(\alpha^{k-1}(x))g'.\]
Combining equalities, we get
\[\begin{split}h\cdot h' &=\; \Pi_{k=1}^nc_{\pi_k(\vec{i}),\pi_k(\vec{j})}(\alpha^{k-1}(x))\Pi_{k=n+1}^{n+n'}c_{\pi_k(\vec{i'}),\pi_k(\vec{j'})}(\alpha^{k-1}(x))\; g\cdot g'\\
&=\; \Pi_{k=1}^{n+n'}c_{\pi_k((\vec{i},\vec{i'})),\pi_k((\vec{j},\vec{j'}))}(\alpha^{k-1}(x))g\cdot g'.
\end{split}\]
By definition of the transition functions $c^{(n+n')}_{(\vec{i},\vec{i'}),(\vec{j},\vec{j'})},$ this implies that
\[h\cdot h=\;c^{(n+n')}_{(\vec{i},\vec{i'}),(\vec{j},\vec{j'})}(x)g\cdot g'.\]  Therefore we have shown that
\[(gg', x, (\vec{i},\vec{i'}), n+n')\;\sim\;(hh', x, (\vec{j},\vec{j'}), n+n')\;\in\;\Lambda_{n+n'}.\]
Thus our proposed groupoid product is well-defined on equivalence classes.

For (b), suppose $n<0$ and $n'<0,$ and $x'=\alpha^n(x)$. Also suppose that
$(g,x,\vec{i},n)\sim (h,x,\vec{j},n)\in \Lambda_n$ and $(g',x'=\alpha^n(x),\vec{i'},n')\sim (h',x'=\alpha^n(x),\vec{j'},n')\in \Lambda_{n'}$.
Then $x\in W_{\vec{i}}\cap W_{\vec{j}}$ and $\alpha^n(x)\in W_{\vec{i'}} \cap W_{\vec{j'}}$. Also we have
\[h=c^{(n)}_{\vec{i}, \vec{j}}(x)g\;\;\text{and}\;\;h'=c^{(n')}_{\vec{i'}, \vec{j'}}(\alpha^n(x))g'.\]
 Therefore \eqref{eq:negative_tf2} gives
\[\begin{split}
h\cdot h'\;&=\;c^{(n)}_{\vec{i}, \vec{j}}(x)g\cdot c^{(n')}_{\vec{i'}, \vec{j'}}(\alpha^n(x))g'\\
&=\;\Pi_{k=1}^{|n|}[c_{\pi_k(\vec{i}),\pi_k(\vec{j})}(\alpha^{-k}(x))]^{-1}\cdot \Pi_{k=1}^{|n'|}[c_{\pi_k(\vec{i'}),\pi_k(\vec{j'})}(\alpha^{-k}(\alpha^n(x)))]^{-1}\cdot gg'\\
&=\;\Pi_{k=1}^{|n|}[c_{\pi_k((\vec{i},\vec{i'})),\pi_k((\vec{j},\vec{j'}))}(\alpha^{-k}(x))]^{-1}\cdot
\Pi_{k=1}^{|n'|}[c_{\pi_{|n|+k}((\vec{i},\vec{i'})),\pi_{|n|+k}((\vec{j},\vec{j'}))}(\alpha^{-k}(\alpha^{-|n|}(x)))]^{-1}\cdot gg'\\
&=\;\;\Pi_{k=1}^{|n|}[c_{\pi_k((\vec{i},\vec{i'})),\pi_k((\vec{j},\vec{j'}))}(\alpha^{-k}(x))]^{-1}\cdot
\Pi_{k=|n|+1}^{|n|+|n'|}[c_{\pi_k((\vec{i},\vec{i'})),\pi_k((\vec{j},\vec{j'}))}(\alpha^{-k}(x))]^{-1}\cdot gg'\\
&=\;\Pi_{k=1}^{|n+n'|}[c_{\pi_k((\vec{i},\vec{i'})),\pi_k((\vec{j},\vec{j'}))}(\alpha^{-k}(x))]^{-1}\cdot gg'\\
&=\;c^{(n+n')}_{(\vec{i},\vec{i'}),(\vec{j},\vec{j'})}(x)\cdot gg'.\end{split}\]

Therefore, in the case $n<0$ and $n'<0$ we have again shown that
$$(gg', x, (\vec{i},\vec{i'}), n+n')\;\sim\;(hh', x, (\vec{j},\vec{j'}), n+n')\;\in\;\Lambda_{n+n'},$$
so that our proposed groupoid product is well-defined on equivalence classes in this case as well.

For (c), suppose $n>0$, $n' = -n<0$ and $\alpha^n(x)=x'$.
Since $n'=-n<0,$ \eqref{eq:negative_oc} implies that
\[x'=\alpha^n(x)\in W_{\vec{i'}}= \; W_{\pi_1(\vec{i'})}\cap \alpha(W_{\pi_2(\vec{i'})})\cap \cdots \cap  \alpha^{(|n'|-1)}(W_{\pi_{|n'|}(\vec{i'})}).\]
Since $\{W_i=\alpha(U_i)\}$ and $n'=-n$, we have
\[\alpha^n(x)\in \;\alpha(U_{\pi_1(\vec{i'})})\cap \alpha^2(U_{\pi_2(\vec{i'})})\cap \cdots \cap  \alpha^n(U_{\pi_{|n|}(\vec{i'})}).\]
This in turn implies the inclusion
\[x\in \alpha^{-(n-1)}(U_{\pi_1(\vec{i'})})\cap \alpha^{-(n-2)}(U_{\pi_2(\vec{i'})})\cap \cdots \cap U_{\pi_{n}(\vec{i'})}= U_{\sigma_n(\vec{i'})},\]
where $\sigma_n$ is the ``flip'' involution given in \eqref{eq:flip}.
So we have $\alpha^n(x)\in W_{\vec{i'}}$ if and only if $x\in U_{\sigma_n(\vec{i'})}$.
Therefore, if $x\in U_{\vec{i}},$ and $\alpha^n(x)\in  W_{\vec{i'}},$ then $x\in U_{\vec{i}}\cap U_{\sigma_n(\vec{i'})}.$
By definition of our bundle in \eqref{eq:bundle}, it follows that
\begin{equation}
(g,x,\vec{i},n)\;\sim\;(c^{(n)}_{\vec{i},\sigma_n(\vec{i'})}(x)\cdot g, x, \sigma_n(\vec{i'}), n).
\end{equation}
Now suppose that $(g,x,\vec{i},n)\sim (h,x,\vec{j},n)\in \Lambda_n$ and $(g',x'=\alpha^n(x),\vec{i'},-n)\sim (h',x'=\alpha^n(x),\vec{j'},-n)\in \Lambda_{-n}.$

The proposed formula gives us that the product of $(h,x,\vec{j},n)$ and $(h',x'=\alpha^n(x),\vec{j'},-n)$ should be equal to the equivalence class of $(c^{(n)}_{\vec{j},\sigma_n(\vec{j'})}(x)\cdot h\cdot h', x, 1, 0).$

Since $(g,x,\vec{i},n)\sim (h,x,\vec{j},n),$ we know that $x\in U_{\vec{i}}\cap U_{\vec{j}}$ and $h=c^{(n)}_{\vec{i},\vec{j}}(x)g$.
Since  $(g',\alpha^n(x),\vec{i'},-n)\sim (h',\alpha^n(x),\vec{j'},-n),$ we know that
$\alpha^n(x)\in W_{\vec{i'}}\cap W_{\vec{j'}}$ and $h'=c^{(-n)}_{\vec{i'}, \vec{j'}}(\alpha^n(x))g'$. Thus \eqref{eq:negative_tf2} and \eqref{eq:flip} give
\[\begin{split}
h'&=\Pi_{k=1}^{n}[c_{\pi_k(\vec{i'}),\pi_k(\vec{j'})}(\alpha^{-k}(\alpha^n(x)))]^{-1}g'\\
&=\Pi_{k=1}^{n}[c_{\pi_k(\vec{i'}),\pi_k(\vec{j'})}(\alpha^{n-k}(x)))]^{-1}g'\\
&=\Pi_{k=1}^n[c_{\pi_k(\sigma_n(\vec{i'})),\pi_k(\sigma_n(\vec{j'}))}(\alpha^{k-1}(x))]^{-1}g'\\
&=\;[c^{(n)}_{\sigma_n(\vec{i'}),\sigma_n(\vec{j'})}(x)]^{-1}g'.
\end{split}\]

Then using transition function identities, we obtain
\[\begin{split}
c^{(n)}_{\vec{j},\sigma_n(\vec{j'})}(x)\cdot\;h\cdot h'\;&=\;c^{(n)}_{\vec{j},\sigma_n(\vec{j'})}(x)\cdot c^{(n)}_{\vec{i},\vec{j}}(x)g\cdot [c^{(n)}_{\sigma_n(\vec{i'}),\sigma_n(\vec{j'})}(x)]^{-1}g'\\
&=\;c^{(n)}_{\vec{i},\sigma_n(\vec{j'})}(x)[c^{(n)}_{\sigma_n(\vec{i'}),\sigma_n(\vec{j'})}(x)]^{-1}\cdot g\cdot g'\\
&=\;c^{(n)}_{\vec{i},\sigma_n(\vec{i'})}(x)\cdot c^{(n)}_{\sigma_n(\vec{i'}),\sigma_n(\vec{j'})}(x)\cdot [c^{(n)}_{\sigma_n(\vec{i'}),\sigma_n(\vec{j'})}(x)]^{-1}\cdot g\cdot g'\\
&=\;c^{(n)}_{\vec{i},\sigma_n(\vec{i'})}(x)\cdot g\cdot g'.
\end{split}\]
Thus $(c^{(n)}_{\vec{j},\sigma_n(\vec{j'})}(x)\cdot h\cdot h', x, 1, 0)\sim (c^{(n)}_{\vec{i},\sigma_n(\vec{i'})}(x)\cdot g\cdot g', x, 1, 0),$
and our proposed product is well-defined on equivalence classes in this case as well.

For (d), suppose  $n<0$, $n'=-n>0$, and $\alpha^n(x)=x'.$
Again we observe that $x'=\alpha^n(x)$ implies $\alpha^{-n}(x')=x,$ and our reasoning in the proof of (c) shows that for positive integers $-n,$
$x=\alpha^{-n}(x')\in\; W_{\vec{i}}$ if and only if $x'=\alpha^n(x)\in U_{\sigma_{-n}(\vec{i})}.$
It follows that $x'=\alpha^n(x)\in U_{\vec{i'}}\cap U_{\sigma_{-n}(\vec{i})}.$ This implies that
\begin{equation}
(g',x'=\alpha^n(x), \vec{i'}, -n)\sim (c^{(-n)}_{\vec{i'},\sigma_{-n}(\vec{i}))}(\alpha^n(x))g',\alpha^n(x)=x', \sigma_{-n}(\vec{i}),-n).
\end{equation}
The proof that this formula is consistent on equivalence classes is proved in a manner similar to that used in case (c).

For (e), suppose $n<0$, $n'>0$ and $|n|>n'$. Since $|n|=-n>n',$ write $-n=n'+m$ for $m>0,$ so that $n=-n'-m.$
Then Theorem~\ref{thm:twist_comp} (b) gives
\[(g,x,\vec{i},n)= (1_G,x, \vec{i_1},-m)\cdot (g, \alpha^{-m}(x), \vec{i_2}, -n'),\]
where $\vec{i}=(\vec{i_1},\vec{i_2})$ for $\vec{i_1}\in {\mathbb J}^m$ and $\vec{i_2}\in {\mathbb J}^{n'}$.
Then in order that the the groupoid product be associative, we must have
\[\begin{split}
&(g,x,\vec{i},n)\cdot (g',\alpha^n(x),\vec{i'},n')\\
&=\;(1_G, x, \vec{i_1},-m)\cdot \big((g, \alpha^{-m}(x), \vec{i_2}, -n')\cdot (g',\alpha^n(x),\vec{i'},n')\big).
\end{split}\]
By Theorem~\ref{thm:twist_comp} (d), the previous equation is equal to
\[\begin{split}
&\;(1_G, x, \vec{i_1},-m)\cdot (c^{(n')}_{\vec{i'},\sigma_{n'}(\vec{i_2})}(\alpha^{-n'}(\alpha^{-m}(x)))\cdot gg', \alpha^{-m}(x), 1,0)\\
&=\;(c^{(n')}_{\vec{i'},\sigma_{n'}(\vec{i_2})}(\alpha^{-n'-m}(x))\cdot g g', x, \vec{i_1},-m)\\
&=\;(c^{(n')}_{\vec{i'},\sigma_{n'}(\vec{i_2})}(\alpha^{n}(x))\cdot g g', x,\vec{i_1},n+n').
\end{split}\]
Thus for $n<0$, $n'>0$ with $|n|>n'$, we have
\begin{equation}\label{thm:(e)prod}
(g,x,\vec{i},n)\cdot (g',\alpha^n(x),\vec{i'},n')=((c^{(n')}_{\vec{i'},\sigma_{n'}(\vec{i_2})}(\alpha^{n}(x))\cdot g\cdot g', x,\vec{i_1},n+n'),
\end{equation}
where $(\vec{i_1},\vec{i_2})\;=\vec{i}$ for $\vec{i_1}\in {\mathbb J}^m$ and $\vec{i_2}\in {\mathbb J}^{n'}$.
We leave the proof that this formula for multiplication remains consistent on equivalence classes to the reader as it is similar to our earlier results.

For (f), suppose $n<0<n'$ with $|n|<n',$ we write $n'=-n+m$ for some integer $m>0,$ so that $m=n+n'$. Then Theorem~\ref{thm:twist_comp} (a) gives
\[(g',\alpha^n(x),\vec{i'},n')\;=\;(g', \alpha^n(x),\vec{i_1'}, -n)\cdot (1_G, x, \vec{i_2'}, m),\]
where $\vec{i_1'}\in {\mathbb J}^{-n},\;\vec{i_2'}\in {\mathbb J}^m,$ and $\vec{i'}=\;(\vec{i_1'}, \vec{i_1'})\in {\mathbb J}^{n'}.$
Because we require our groupoid product should be associative, and using Theorem~\ref{thm:twist_comp} (d) and \eqref{eq:flip_tf1}, we obtain
\[\begin{split}
(g,x,\vec{i},n)\cdot (g',\alpha^n(x),\vec{i'},n')
=\;(c^{(-n)}_{\vec{i_1'},\sigma_{-n}(\vec{i})}(\alpha^n(x))\cdot g\cdot g', x, \vec{i_2'}, n+n').
\end{split}\]
One checks that this formula for multiplication remains consistent on equivalence classes, just as above.

For (g), suppose $|n'|>n>0>n'$. Write $-n'=|n'|=n+m$ for some integer $m>0,$ so that $n+n'=-m<0.$
Note that $\vec{i'}\in {\mathbb J}^{-n'}={\mathbb J}^{n+m}.$ Thus Theorem~\ref{thm:twist_comp} (b) gives
\[(g',\alpha^n(x),\vec{i'},n')=(g', \alpha^n(x), \vec{i_1'}, -n)\cdot (1_G, x, \vec{i_2'}, -m),\]
where $\vec{i_1'}\in  {\mathbb J}^{n},\; \vec{i_2'}\in {\mathbb J}^{m},$ where $(\vec{i_1'},\vec{i_2'})=\vec{i'}\in {\mathbb J}^{-n'}.$

Again using the fact that the groupoid product must be assoiciative and Theorem~\ref{thm:twist_comp} (c), we have
\[\begin{split}
(g,x,\vec{i},n)\cdot (g',\alpha^n(x),\vec{i'},n')
=\;(c^{(n)}_{\vec{i},\sigma_n(\vec{i_1'})}(x)\cdot gg', x,  \vec{i_2'}, n+n').
\end{split}\]
Then one checks that this formula for multiplication remains consistent on equivalence classes as before.

For (h), suppose $n>|n'|>0>n',$ and write $n=|n'|+m=-n'+m$ for some integer $m>0$ so that $m=n+n'.$
Then Theorem~\ref{thm:twist_comp} (a) gives
\[\begin{split}
(g,x,\vec{i},n)&=(1_G, x, \vec{i_1}, m)\cdot (g, \alpha^m(x), \vec{i_2}, -n')=\;(1_G, x, \vec{i_1}, n+n')\cdot (g, \alpha^{n+n'}(x), \vec{i_2}, -n'),
\end{split}\]
where $\vec{i_1}\in {\mathbb J}^m$ and $\vec{i_2}\in {\mathbb J}^{-n'}$ are chosen so that $(\vec{i_1},\vec{i_2})=\;\vec{i}\;\in\;{\mathbb J}^n.$

The fact that our groupoid product must be associative once again together with Theorem~\ref{thm:twist_comp} (c) gives
\[\begin{split}
(g,x,\vec{i},n)\cdot (g',\alpha^n(x),\vec{i'},n')
=\;(c^{(-n')}_{\vec{i_2},\sigma_{-n'}(\vec{i'})}(\alpha^{n+n'}(x))\cdot gg', x, \vec{i_1}, n+n').
\end{split}\]
Then one checks that this formula for multiplication also remains consistent on equivalence classes.

Also it is straightforward to show that the inverse of $[(g,x,\vec{i},n)]\in\Lambda_n$ is given by $[(g^{-1},\alpha^n(x), \sigma_{|n|}(\vec{i}),-n)]$ using (c) and (d).

It remains to show that the product is associative and continuous.  We prove associativity in a typical case leaving the remaining cases to the reader, since as in our earlier discussion as regards consistency of multiplication formulas, it involves many cases.

Suppose that we have $\lambda_1=(g_1,x,\vec{i},m),\; \lambda_2=(g_2,\alpha^{m}(x),\vec{j},n)$ and $\lambda_3=(g_3,\alpha^{m+n}(x),\vec{k},p)$.
Let us suppose that $m<0,\; n>0,$ and $p<0$ with $|m|>n$ and $n<|p|.$
We first compute
$$
(\lambda_1\cdot \lambda_2)\cdot \lambda_3\;=\;([(g_1,x,\vec{i},m)]\cdot [(g_2,\alpha^{m}(x),\vec{j},n)])\cdot [(g_3,\alpha^{m+n}(x),\vec{k},p)].
$$
Since $m<0<n<|m|,$ write $\vec{i}=(\vec{i_1},\vec{i_2})$ with $\vec{i_1}\in {\mathbb J}^{-n-m}$ and $\vec{i_2}\in {\mathbb J}^{n}$, then  formula (e) gives:
\[
[(g_1,x,\vec{i},m)]\cdot [(g_2,\alpha^{m}(x),\vec{j},n)] =
[(c^{(n)}_{\vec{j},\sigma_{n}(\vec{i_2})}(\alpha^{m}(x))\cdot g_1\cdot g_2, x,\vec{i_1},m+n)].
\]
Note in this case $m+n<0$ so that we fall into case (b) and so
\begin{align*}
(\lambda_1\cdot \lambda_2)\cdot \lambda_3 &=
[(c^{(n)}_{\vec{j}, \sigma_{n}(\vec{i_2})}(\alpha^{m}(x))\cdot g_1\cdot g_2, x,\vec{i_1},m+n)]\cdot [(g_3,\alpha^{m+n}(x),\vec{k},p)] \\   &=
[(c^{(n)}_{\vec{j}, \sigma_{n}(\vec{i_2})}(\alpha^{m}(x))\cdot g_1g_2 g_3, x, (\vec{i_1},\vec{k}), m+n+p)].
\end{align*}
On the other hand, considering the case where $|p| > n>0>p,$ (which is case (g) of the Theorem), then, assuming
$(\vec{k_1},\vec{k_2})=\vec{k}\in {\mathbb J}^{-p},$ for $\vec{k_1}\in {\mathbb J}^{n}$ and $\vec{k_2}\in {\mathbb J}^{-p-n},$ we have
\begin{align*}
\lambda_1\cdot (\lambda_2 \cdot \lambda_3) &=
[(g_1,x,\vec{i},m)]\cdot ([(g_2,\alpha^{m}(x),\vec{j},n)]\cdot [(g_3,\alpha^{m+n}(x),\vec{k},p)]) \\
&=\;[(g_1,x,\vec{i},m)]\cdot ([c^{(n)}_{\vec{j},\sigma_n(\vec{k_1})}(\alpha^m(x))g_2g_3, \alpha^m(x), \vec{k_2}, n+p)]
\intertext{we have $m<0$ and $n+p<0$, so case (b) holds again and we obtain}
&=[(c^{(n)}_{\vec{j},\sigma_n(\vec{k_1})}(\alpha^m(x))\cdot g_1g_2g_3, x, (\vec{i},\vec{k_2}), m+n+p)].
\end{align*}
So to complete the proof of associativity, it suffices to show that
\begin{multline}\label{eq:associativity}
[(c^{(n)}_{\vec{j},\sigma_n(\vec{k_1})}(\alpha^m(x))\cdot g_1g_2g_3, x, (\vec{i},\vec{k_2}), m+n+p)]\\
=[(c^{(n)}_{\vec{j}, \sigma_{n}(\vec{i_2})}(\alpha^{m}(x))\cdot g_1g_2 g_3, x, (\vec{i_1},\vec{k}), m+n+p)]
\end{multline}
as elements of $\Lambda_{m+n+p}$.

Recall that for $n<0$, $(g,x,\vec{i},n) \in \Lambda_n$ is equivalent to
$(h,x,\vec{j},n)\in \Lambda_n$ if $x\in W_{\vec{i}}\cap W_{\vec{j}}$ and
$h=c^n_{\vec{i},\vec{j}}(x) g.$  Thus, showing \eqref{eq:associativity} amounts to showing that
\[
c^{(n)}_{\vec{j},\sigma_n(\vec{k_1})}(\alpha^{m}(x))c^{(m+n+p)}_{(\vec{i},\vec{k_2}),(\vec{i_1},\vec{k})}(x)
\;=\;c^{(n)}_{\vec{j},\sigma_{n}(\vec{i_2})}(\alpha^{m}(x)).
\]
Using Equation~\eqref{eq:negative_tf2} and the fact that both the first $-m-n$ components and last $-p-n$ components of the two vectors in question are the same, one calculates:
\[\begin{split}
&c^{(n)}_{\vec{j}, \sigma_n(\vec{k_1})}(\alpha^{m}(x))c^{(m+n+p)}_{(\vec{i},\vec{k_2}),(\vec{i_1},\vec{k})}(x)\\
&=\;c^{(n)}_{\vec{j}, \sigma_n(\vec{k_1})}(\alpha^{m}(x))\Pi_{l=1}^{n}[c^{(1)}_{\pi_l(\vec{i_2}),\pi_l(\vec{k_1})}(\alpha^{m+n-l}(x))]^{-1}\\
&=\;c^{(n)}_{\vec{j}, \sigma_n(\vec{k_1})}(\alpha^{m}(x))\Pi_{l=1}^{n}[c^{(1)}_{\pi_l(\vec{k_1}),\pi_l(\vec{i_2})}(\alpha^{m+n-l}(x))]\\
&=\;\Pi_{l=1}^n[c^{(1)}_{\pi_l(\vec{j}),\pi_{n+1-l}(\vec{k_1})}(\alpha^{m+l-1}(x))]\Pi_{l=1}^{n}[c^{(1)}_{\pi_l(\vec{k_1}),\pi_l(\vec{i_2})}(\alpha^{m+n-l}(x))]\\
&=\;\Pi_{l=1}^{n}[c^{(1)}_{\pi_l(\vec{j}),\pi_{n+1-l}(\vec{k_1})}(\alpha^{m+l-1}(x))]\Pi_{l=1}^{n} [c^{(1)}_{\pi_{n+1-l}(\vec{k_1}),\pi_{n+1-l}(\vec{i_2})}(\alpha^{m+l-1}(x))]\\
&=\;\Pi_{l=1}^{n}[c^{(1)}_{\pi_l(\vec{j}),\pi_{n+1-l}(\vec{k_1})}(\alpha^{m+l-1}(x))]\cdot [c^{(1)}_{\pi_{n+1-l}(\vec{k_1}),\pi_{n+1-l}(\vec{i_2})}(\alpha^{m+l-1}(x))]\\
&=\;\Pi_{l=1}^{n}[c^{(1)}_{\pi_l(\vec{j}),\pi_{n+1-l}(\vec{i_2})}(\alpha^{m+l-1}(x))]
=\;\Pi_{l=1}^{n}[c^{(1)}_{\pi_l(\vec{j}),\pi_{1}(\sigma_{n}(\vec{i_2}))}(\alpha^{l-1}(\alpha^m(x)))]\\
&=\;c^{(n)}_{\vec{j},\sigma_{n}(\vec{i_2})}(\alpha^{m}(x)).
\end{split}\]

Therefore the product is associative in this case, and in the other cases associativity is similarly shown.

As for continuity, we note that the maps $c^{(n)}_{\vec{i}, \vec{j}}$ are continuous on their open domains, and group multiplication is continous, and for each $n>0$ the map from $G\times  U_{\vec{i}}\times \{\vec{i}\} \times\{n\}$ to $\Lambda_n$ is continuous (being a quotient map), and for $n<0,$ the map from $G\times  W_{\vec{i}}\times \{\vec{i}\} \times\{n\}$ to $\Lambda_n$ is likewise continuous, and the map from $G\times X\times \{1\}\times\{0\}$ to $\Lambda_0$ is a homeomorphism.  It follows that multiplication as defined in the formulas above is continuous.
\end{proof}

\begin{rmk}\label{rmk:QHM2}
In the case of QHMs, the circle bundle $E$ over $\TT^2$ can be obtained from the symmetric $C(\TT^2)$-bimodule $M^c$ first constructed by Rieffel in \cite{Rie2}, and this will give $\Lambda_1,$ by \cite[Theorem~3.1]{DKM}.
 We only need two open sets $U_1$ and $U_2$ to cover $\TT^2$ to form $E$, and with transition functions $\{c_{i,j}:U_i\cap U_j\to \TT\}$, the circle bundle $E$ is given by $(\bigsqcup_{i=1}^2 \TT\times U_i\times \{i\})/\negthickspace\sim$, where $(g,x,i)\sim (g c_{ij}(x), x,j)$ for $x\in U_i$ and $g\in \TT$.
Then we obtain $\Lambda_n$ for each $n\in\ZZ$ as a circle bundle over $\TT^2$ using the refinement of the open covers $\{U_i\}_{i=1,2}$ described in the beginning of this section, and the twist groupoid $\Lambda$ is equal to $\bigsqcup_{n\in\ZZ} \Lambda_n$. The groupoid product of $\Lambda$ can be obtained using Theorem~\ref{thm:twist_comp}, and by the results of that Theorem it will be consistent on equivalence classes.
\end{rmk}

\end{document}